\newcommand{\N}{{\mathbb N}}
\newcommand{\Z}{{\mathbb Z}}
\newcommand{\Q}{{\mathbb Q}}
\newcommand{\R}{{\mathbb R}}
\newcommand{\C}{{\mathbb C}}
\newcommand{\semi}{{\mathbb o}}
\DeclareMathOperator{\lie}{{\mathfrak g}}
\DeclareMathOperator{\lieH}{{\mathfrak h}}
\DeclareMathOperator{\Aut}{{\rm Aut}}
\DeclareMathOperator{\Aff}{{\rm Aff}}
\DeclareMathOperator{\aff}{{\rm aff}}
\DeclareMathOperator{\Endo}{Endo}
\DeclareMathOperator{\Dim}{{\rm dim}}
\DeclareMathOperator{\Tr}{{\rm Tr}}
\DeclareMathOperator{\Id}{Id}
\DeclareMathOperator{\dd}{\delta}
\DeclareMathOperator{\D}{{\mathfrak D}}
\DeclareMathOperator{\A}{{\mathfrak A}}
\DeclareMathOperator{\B}{{\mathfrak B}}
\DeclareMathOperator{\GL}{{\rm GL}}
\newtheorem{theorem}{Theorem}[section]
\newtheorem{proposition}[theorem]{Proposition}
\newtheorem{lemma}[theorem]{Lemma}
\newtheorem{corollary}[theorem]{Corollary}
\newtheorem{definition}[theorem]{Definition}
\newtheorem*{remark}{Remark}
\newcommand{\ds}{\displaystyle}
\newcommand{\ra}{\rightarrow}
\begin{document}
\title{\bf Nielsen zeta functions on infra-nilmanifolds up to dimension three}
\author{Gert-Jan Dugardein\\
KULeuven Kulak, E.\ Sabbelaan 53, B-8500 Kortrijk}
\date{\today}
\maketitle

\begin{abstract}
In \cite{dd13-2}, we have proved that every Nielsen zeta function for a self-map on an infra-nilmanifold is rational. In this paper, we will list all possible self-maps on infra-nilmanifolds up to dimension three and we will give the Nielsen zeta function for any of those maps, by using the techniques developed in \cite{dd13-2}.
\end{abstract}
\section{Nielsen theory and dynamical zeta functions}
When $X$ is a compact polyhedron and $f:X\to X$ is a continuous self-map, we can define two different homotopy-invariant integers that give us information about the fixed points of $f$. The first of these integers is the Lefschetz number $L(f)$ which is defined as \[ L(f)= \sum_{i=0}^{{\Dim}\;X} (-1)^i {\Tr} \left( f_{\ast,i} : H_i(X,\Q) \to H_i(X,\Q)\right).\]
It is a well known fact that, if $L(f)\neq 0$, any map homotopic to $f$ has at least one fixed point. The second integer is the Nielsen number $N(f)$, which is defined as the number of essential fixed point classes of a $f$. By definition, $N(f)$ will be a lower bound for the number of fixed points of any map homotopic to $f$. More information on both numbers can be found in e.g. \cite{brow71-1}, \cite{jian83-1} and \cite{kian89-1}.

\medskip

It is clear that $N(f)$ contains more information about the fixed points of $f$ than $L(f)$. Unfortunately, the Nielsen number is often very hard to compute, while the Lefschetz number is easier to compute. This sometimes makes the Lefschetz number the better alternative to consider. For infra-nilmanifolds, however, there exists a formula which allows us to compute the Nielsen number in a strictly algebraic way. This is the main reason why the infra-nilmanifolds are a good class for the study of Nielsen theory.

\medskip

Lefschetz and Nielsen numbers can be used to define the so-called dynamical zeta functions. The Lefschetz zeta function of $f$ was introduced by S.~Smale in \cite{smal67-1} and is given by \[ L_f(z)=\exp\left( \sum_{k=1}^{+\infty}\frac{L(f^k)}{k}z^k\right).\]In his paper, Smale also proved that the Lefschetz zeta function is always rational for self-maps on compact polyhedra. The Nielsen zeta function, on the other hand, was defined for the first time by A.~Fel'shtyn \cite{fels88-1,fp85-1}: \[ N_f(z)=\exp\left( \sum_{k=1}^{+\infty}\frac{N(f^k)}{k}z^k\right).\]Unlike the Lefschetz zeta function, the Nielsen zeta function does not have to be rational in general. For a thorough discussion concerning dynamical zeta functions, we refer to \cite{fels00-2}.
\medskip

In \cite{dd13-2}, we were able to prove that Nielsen zeta functions for self-maps on infra-nilmanifolds are always rational. The idea is to show that $N(f^k)$ equals $|L(f^k)|$ or equals $|L(f^k) - L(f^k_+)|$ where $f_+$ is a lift of $f$ to a 2-fold covering of the given infra-nilmanifold. This allows us to express every Nielsen zeta function on an infra-nilmanifold as a quotient of at most two Lefschetz zeta functions, from which the rationality of the Nielsen zeta function follows easily.

\medskip

In this paper we will list all possible self-maps on infra-nilmanifolds of dimension $1$, $ 2$ and $3$ up to a homotopy. For each of these maps, we will compute the Nielsen zeta functions. Since these zeta functions are homotopy-invariant, this paper will contain all possible Nielsen zeta functions on lower-dimensional infra-nilmanifolds.

\section{Infra-nilmanifolds}
Infra-nilmanifolds are modeled on a connected and simply connected nilpotent Lie group $G$. Given such a Lie group $G$, we can consider the group of affine transformations on $G$, which is defined as the semi-direct product $\Aff(G)= G\semi \Aut(G)$. This group has a very natural left action on $G$:
\[ \forall (g,\alpha)\in \Aff(G),\, \forall h \in G: \;\;^{(g,\alpha)}h= g \alpha(h).\]When $G$ is abelian, $G$ will be isomorphic to $\R^n$ and $\Aff(G)$ will be the usual affine group.\medskip

Let $p:\Aff(G)=G\semi \Aut(G) \to \Aut(G)$ be the natural projection on the second factor.  

\begin{definition} When $G$ is a connected and simply connected nilpotent Lie group, a subgroup $\Gamma \subseteq \Aff(G)$ is called almost-crystallographic if and only if $p(\Gamma)$ is finite and $\Gamma\cap G$ is a uniform and discrete subgroup of $G$. The finite group $F=p(\Gamma)$ is called the holonomy group of $\Gamma$.
\end{definition}

Note that the action of $\Gamma$ on $G$ is properly discontinuous and cocompact. When $\Gamma$ is torsion-free, one can prove that this action is also free and therefore the resulting quotient space $\Gamma\backslash G$ will be a manifold. We have the following definition:
\begin{definition}
A torsion-free almost-crystallographic group $\Gamma\subseteq \Aff(G) $ 
is called an almost-Bieberbach group, and the corresponding manifold $\Gamma\backslash G$ is called an infra-nilmanifold (modeled on $G$).  
\end{definition} 

When $G$ is abelian, we will simply call $\Gamma$ a Bieberbach group and $\Gamma\backslash G$ will be a compact flat manifold.\medskip

The universal covering space of $\Gamma\backslash G$ will be the Lie group $G$. The fundamental group of this infra-nilmanifold will be $\Gamma$. When the holonomy group is trivial, then $\Gamma\cap G=\Gamma$. Hence, $\Gamma$ will be a lattice in $G$ and the corresponding manifold $\Gamma\backslash G$ is a nilmanifold. In the abelian case, this means that $\Gamma\cong \Z^n$ and that the corresponding manifold is a torus $T^n$.

\medskip

Before we can study Nielsen theory on infra-nilmanifolds, we first need to understand how all possible self-maps look. Of course, since $L(f)$ and $N(f)$ are homotopy-invariant, it suffices to know all these maps up to homotopy. In \cite{lee95-2}, K.B.\ Lee gives a complete description. Define the semigroup $\aff(G)=G\semi \Endo(G)$. Note that $\aff(G)$ acts on $G$ in a similar way as $\Aff(G)$. After all, for $\dd \in G$ and $\D\in \Endo(G)$, we can define the following action:\[ (\dd,\D): \; G \rightarrow G:\; h \mapsto \dd \D(h).\]Elements like $(\dd,\D)$ can be seen as affine maps, since $\aff(G)$ is a generalization of the semigroup of affine maps $\aff(\R^n)$ to the nilpotent case.

\begin{theorem}[K.B.\ Lee \cite{lee95-2}]
\label{leemaps} Let $G$ be a connected and simply connected nilpotent Lie group and suppose that $\Gamma, \Gamma'\subseteq \Aff(G)$ are two almost-crystallographic groups modeled on $G$. 
Then for any homomorphism $\varphi: \Gamma\rightarrow \Gamma'$ there 
exists an element $  (\dd, \D)\in \aff(G)$ such that 
\[ \forall \gamma \in \Gamma: \; \varphi(\gamma) (\dd, \D) =  (\dd, \D) \gamma.\] 
\end{theorem}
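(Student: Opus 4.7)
My plan is to combine Mal'cev's rigidity theorem for lattices in simply connected nilpotent Lie groups with an analysis of how the holonomy action interacts with this extension, using an induction up the central series of $G$ to handle the translation part.

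First I would reduce to a lattice. Set $N = \Gamma\cap G$ and $N' = \Gamma'\cap G$; both are uniform lattices in $G$. Since $N'$ has finite index in $\Gamma'$, the preimage $\varphi^{-1}(N')$ has finite index in $\Gamma$, so $K := \varphi^{-1}(N')\cap N$ has finite index in $N$ and therefore remains a lattice in $G$. The restriction $\varphi|_K:K\to N'\subseteq G$ is then a homomorphism from a lattice in $G$ into $G$, and by Mal'cev's rigidity theorem it extends uniquely to a continuous Lie group homomorphism $D_0:G\to G$, i.e.\ an element of $\Endo(G)$.

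Next I would unpack what the equation $\varphi(\gamma)(\dd,\D)=(\dd,\D)\gamma$ really asks for. Writing $\gamma=(n,\alpha)$ and $\varphi(\gamma)=(n',\alpha')$, the semi-direct-product multiplication gives
\[ (\dd,\D)(n,\alpha)=(\dd\,\D(n),\,\D\alpha),\qquad (n',\alpha')(\dd,\D)=(n'\alpha'(\dd),\,\alpha'\D), \]
so the target identity splits into a \emph{linear} identity $\D\circ\alpha=\alpha'\circ\D$ and a \emph{translation} identity $\dd\,\D(n)=n'\alpha'(\dd)$, both required for every $\gamma\in\Gamma$. Specialising the translation identity to $\gamma=(k,1)$ with $k\in K$ yields $\dd\,\D(k)=D_0(k)\,\dd$, which forces the ansatz $\D=\Inn_{\dd^{-1}}\circ D_0$, with $\dd\in G$ still to be chosen.

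The core of the proof is then to produce a suitable $\dd$. Conjugating a lattice element $k\in K\cap\gamma K\gamma^{-1}$ by $\gamma$, applying $\varphi$, and invoking uniqueness of the Mal'cev extension, I would derive the twisted equivariance
\[ D_0\circ\alpha \;=\; \Inn_{c(\alpha)}\circ\alpha'\circ D_0,\qquad c(\alpha):=D_0(n)^{-1}n', \]
where the correction $c(\alpha)$ only depends on the holonomy class $\alpha$, not on the chosen lift $n$ (this uses $\varphi$ applied to differences of two lifts, which lie in $N$). Feeding the ansatz for $\D$ into both the linear and the translation identity collapses them to a single non-abelian coboundary equation $c(\alpha)=\dd\cdot\alpha'(\dd)^{-1}$, to be solved simultaneously for every $\alpha$ in the finite holonomy $F=p(\Gamma)$.

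The main obstacle is the solvability of this coboundary equation, and this is where I expect the real work to lie. I would attack it by induction on the nilpotency class of $G$ via the descending central series. The abelian base case is the classical Bieberbach/Auslander rigidity result, where the obstruction sits in a first cohomology group of the finite group $F$ with coefficients in a real vector space and hence vanishes by averaging over $F$. In the inductive step one reduces modulo a term of the central series, applies the inductive hypothesis on the quotient, and then solves the residual problem inside the central (abelian) subquotient by the same averaging trick, lifting the solution back through the exponential. Iterating up the central series yields $\dd\in G$ for which $(\dd,\Inn_{\dd^{-1}}\circ D_0)\in\aff(G)$ satisfies Lee's equation on all of $\Gamma$.
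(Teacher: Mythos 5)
The paper does not actually prove this statement: it is quoted verbatim from K.B.\ Lee's work, so there is no in-paper argument to compare against. Your proposal reconstructs what is essentially the standard (Lee's original) proof: Mal'cev rigidity to extend $\varphi$ restricted to a lattice to a Lie endomorphism $D_0$, reduction of Lee's equation to a twisted non-abelian coboundary problem for the translational part, and vanishing of the relevant first cohomology by averaging over a finite index set combined with induction up the central series of $G$. The architecture is sound and all the essential ingredients are present.

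One step needs repair. You assert that $c(\alpha)=D_0(n)^{-1}n'$ depends only on the holonomy class $\alpha=p(\gamma)$, justified by ``differences of two lifts lie in $N$''. This silently assumes $\varphi(N)\subseteq N'$, which is false in general: for $G=\R^2$, $\Gamma=\Z^2$ and $\Gamma'$ the Klein bottle group, the homomorphism sending $\Z^2$ onto the infinite cyclic subgroup generated by the glide reflection maps $N=\Z^2$ outside $N'=\Gamma'\cap\R^2$. Consequently neither $c$ nor the acting automorphism $\alpha'=p(\varphi(\gamma))$ is a function of $\alpha$ alone, so the coboundary equation cannot be indexed by $F$. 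The fix is routine: $\gamma\mapsto c(\gamma)$ is a $1$-cocycle for the action $p\circ\varphi:\Gamma\to F'$ (this follows from your twisted equivariance identity), it vanishes on the finite-index subgroup $K$, on which the action is also trivial, hence it is constant on cosets of $K$; the averaging in your abelian base case and in each inductive step must then be performed over the finite coset space $\Gamma/K$ rather than over $F$. With that change the argument goes through.
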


Note that we can consider the equality $ \varphi(\gamma) (\dd, \D) =  (\dd, \D) \gamma$ in $\aff(G)$, since $\Aff(G)$ is contained in $\aff(G)$. With this equality in mind, it is easy to see that the affine map $(\dd,\D)$ induces a well-defined map \[\overline{(\dd,\D)}: \Gamma \backslash G \rightarrow \Gamma' \backslash G: \; \Gamma h \rightarrow \Gamma' \dd \D(h),\]
which exactly induces the morphism $\varphi$ on the level of the fundamental groups.

\medskip

On the other hand, if we choose an arbitrary map $f:\Gamma\backslash G\ra \Gamma'\backslash G$ between two infra-nilmanifolds and choose a lifting $\tilde{f}:G \to G$ of $f$, then there exists a morphism $\tilde{f}_\ast:\Gamma\to \Gamma'$ such that $\tilde{f}_\ast(\gamma) \circ \tilde{f} = \tilde{f}\circ \gamma$, for all $\gamma\in \Gamma$. By Theorem~\ref{leemaps}, an affine map $(\dd,\D)\in \aff(G)$ exists which also satisfies $\tilde{f}_\ast(\gamma) \circ (\dd,\D)= (\dd,\D)\circ \gamma$ for all $\gamma\in \Gamma$. Therefore, the induced map $\overline{(\dd,D)} $ and 
$f$ are homotopic. We will call $(\dd,\D)$ an affine homotopy lift of $f$. 

\medskip 
In \cite{ll09-1}, J.B. Lee and K.B. Lee gave a formula to compute Lefschetz and Nielsen numbers on infra-nilmanifolds. Pick an infra-nilmanifold $\Gamma\backslash G$, determined by the almost-Bieberbach group $\Gamma\subseteq \Aff(G)$ and let $F\subseteq \Aut(G)$ denote the holonomy group of $\Gamma$. We will write $\lie$ for the Lie algebra of $G$. Because $G$ is a nilpotent, connected and simply connected Lie group, the map $\exp:\lie\to G$ will be a diffeomorphism. Therefore, $\Endo(G)$ and $\Endo(\lie)$ are isomorphic and for every endomorphism $\A\in \Endo(G)$, we have a unique $\A_\ast\in \Endo(\lie)$, which is determined by the relation $\A \circ \exp= \exp \circ \A_\ast$. This $\A_\ast$ will be called the differential of $\A$. Of course, $\A$ is invertible if and only if $\A_\ast$ is invertible.
\begin{theorem}[J.B.\ Lee and K.B.\ Lee \cite{ll09-1}] \label{LeeForm}Let $\Gamma\subseteq \Aff(G)$ be an almost-Bieberbach group with holonomy group $F\subseteq \Aut(G)$. Let $M=\Gamma\backslash G$ be the associated infra-nilmanifold.  If 
 $f:M\ra M$ is a map with affine homotopy lift $(\dd, \D)$, then  
\[L(f)=\frac{1}{\# F}\sum_{\A \in F}\det(I-\A_\ast\D_\ast)\]
and
\[N(f)=\frac{1}{\# F}\sum_{\A \in F}|\det(I-\A_\ast\D_\ast)|.\]
(Here $I$ is the identity).
\end{theorem}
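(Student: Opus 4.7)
Since $L(\cdot)$ and $N(\cdot)$ are homotopy invariants, we may replace $f$ by its affine lift $\bar f=\overline{(\dd,\D)}$ and compute the two numbers directly from the affine data. Set $\Lambda=\Gamma\cap G$; this is a uniform lattice in $G$, so $N:=\Lambda\backslash G$ is a nilmanifold, and since $\Lambda=\ker(p|_{\Gamma})$, the quotient $N\to M$ is a regular covering with deck group $F$. The affine lifts of $\bar f$ on $G$ that descend to $N$ are parametrized by $\Lambda\backslash\Gamma\cong F$: for each $\A\in F$, one obtains a self-map $\tilde f_{\A}$ of $N$ whose affine homotopy lift has the form $(\dd_{\A},\A\D)$ for some $\dd_{\A}\in G$.

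For the Lefschetz formula, the plan is to start from the classical Anosov--Nomizu computation on the nilmanifold $N$. Nomizu's theorem identifies $H^{\ast}(N;\R)$ with the Chevalley--Eilenberg cohomology of $\lie$, on which $\tilde f_{\A}^{\ast}$ acts as the dual of $(\A\D)_{\ast}=\A_{\ast}\D_{\ast}$; the identity $\sum_{i}(-1)^{i}\Tr(\wedge^{i}A)=\det(I-A)$ then yields $L(\tilde f_{\A})=\det(I-\A_{\ast}\D_{\ast})$. To pass from $N$ to $M=F\backslash N$, I would use $H^{\ast}(M;\Q)\cong H^{\ast}(N;\Q)^{F}$, whence $\Tr(\bar f^{\ast}|_{H^{i}(M)})=\frac{1}{\#F}\sum_{\A}\Tr(\tilde f_{\A}^{\ast}|_{H^{i}(N)})$. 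Summing over $i$ with alternating signs gives
\[ L(f) \;=\; \frac{1}{\#F}\sum_{\A\in F}L(\tilde f_{\A}) \;=\; \frac{1}{\#F}\sum_{\A\in F}\det(I-\A_{\ast}\D_{\ast}). \]

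For the Nielsen formula, the starting point is Anosov's theorem on the nilmanifold, which gives $N(\tilde f_{\A})=|L(\tilde f_{\A})|=|\det(I-\A_{\ast}\D_{\ast})|$ for each lift. The task is then to relate $N(f)$ to the $N(\tilde f_{\A})$'s through the covering $N\to M$: each fixed point class of $\bar f$ pulls back to a disjoint union of fixed point classes of the $\tilde f_{\A}$'s, and $F$ permutes these on the cover. The main obstacle is ruling out cancellation of indices inside a single $F$-orbit of lifted classes before $|\cdot|$ is applied; this sign coherence is exactly what the nilpotent hypothesis provides, via the single-determinant formula above together with Anosov's theorem on $N$. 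Once this is established, a straightforward orbit count yields
\[ N(f) \;=\; \frac{1}{\#F}\sum_{\A\in F}N(\tilde f_{\A}) \;=\; \frac{1}{\#F}\sum_{\A\in F}|\det(I-\A_{\ast}\D_{\ast})|, \]
which is the desired formula.
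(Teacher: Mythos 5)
The paper itself offers no proof of Theorem \ref{LeeForm}: it is quoted directly from \cite{ll09-1}, so the comparison here is against the argument in that source. Your overall strategy --- lift to the holonomy cover $N=\Lambda\backslash G$, compute on the nilmanifold via Nomizu and Anosov, and average over $F$ --- is the standard one, and your Lefschetz half is essentially complete: the transfer identity $\Tr(\bar f^{\ast}|_{H^{i}(N)^{F}})=\frac{1}{\#F}\sum_{\A}\Tr(\tilde f_{\A}^{\ast}|_{H^{i}(N)})$ combined with the Hopf trace formula on the Chevalley--Eilenberg complex does yield $L(f)=\frac{1}{\#F}\sum_{\A}\det(I-\A_{\ast}\D_{\ast})$. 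One step you assert without justification is that $\bar f$ lifts to $N$ at all, i.e.\ that $\tilde f_{\ast}(\Lambda)\subseteq\Lambda$. From $\tilde f_{\ast}(\lambda,1)(\dd,\D)=(\dd,\D)(\lambda,1)$ one only deduces $\A\D=\D$ for the holonomy part $\A$ of $\tilde f_{\ast}(\lambda,1)$; this forces $\A=1$ when $\D$ is invertible, but not when $\D_{\ast}$ is singular (and singular $\D_{\ast}$ genuinely occurs in the paper's tables). The usual repair is to pass to a fully invariant finite-index lattice, e.g.\ the subgroup generated by all $(\#F)$-th powers, and then regroup the resulting average over $\Gamma/\Lambda_{0}$ according to holonomy parts.

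The genuine gap is the Nielsen half. The identity $N(f)=\frac{1}{\#F}\sum_{\A}N(\tilde f_{\A})$ is an averaging formula for Nielsen numbers, and unlike its Lefschetz counterpart it fails for general finite regular coverings; it is precisely the substance of the theorem, and your write-up explicitly defers it (``Once this is established\dots''). What is actually needed is the local index computation: for $\gamma=(a,\A)\in\Gamma$, the affine map $\gamma\circ(\dd,\D)$ of $G$ has, whenever $\det(I-\A_{\ast}\D_{\ast})\neq 0$, a unique fixed point of index $\mathrm{sign}\det(I-\A_{\ast}\D_{\ast})$, and no fixed point otherwise. Consequently every fixed point class of $f$ is either inessential or has index $\pm 1$ whose sign depends only on the holonomy part of the associated Reidemeister class; this is exactly what excludes cancellation within an $F$-orbit of lifted classes and validates the orbit count. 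Note that once this index computation is in hand, both formulas follow directly by counting Reidemeister classes with prescribed holonomy part, without Nomizu, Anosov, or the covering space argument. As written, your proposal proves the Lefschetz formula (modulo liftability) but only reduces the Nielsen formula to an unproved claim.
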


\section{Preliminary results}
As we already mentioned in the first paragraph, the computations in this paper are largely based on the theory developed in \cite{dd13-2}. In this paragraph, we will repeat the necessary definitions and we will list the most important results.\medskip

To every almost-crystallographic group $\Gamma$ we can assign a representation, which describes the holonomy group of $\Gamma$.
\begin{definition}
For an almost-crystallographic group $\Gamma\subseteq \Aff(G)$, with holonomy group $F\subseteq  \Aut(G)$, the holonomy representation of $\Gamma$ is the representation 
\[ \rho: F \rightarrow \GL(\lie): \A \mapsto \A_\ast\]
\end{definition}
By choosing a basis of $\lie$, we can identify $\lie$ with $\R^n$ and this allows us to view the holonomy representation $\rho$ as a 
real representation $\rho: F \rightarrow \GL_n(\R)$. 

\medskip

By using this holonomy representation, we can give a necessary algebraic condition for an affine map $(\dd,\D)$ in order to be the affine homotopy lift of a map on the infra-nilmanifold. This condition will help us compute all possible self-maps on low-dimensional infra-nilmanifolds. 
\begin{proposition}[See \cite{ddp11-1}]\label{semi-affconj}
Let $\Gamma\subseteq \Aff(G)$ be an almost--Bieberbach group and let $M=\Gamma\backslash G$ be the corresponding infra--nilmanifold. Let $\rho:F \ra \GL(\lie)$ be the associated holonomy representation. If $f:M\ra M$ is a map with affine homotopy lift $(\dd,\D)$, there exists a function $\phi:F \ra F$ such that 
\[ \forall x \in F:\; \rho(\phi(x)) \D_\ast = \D_\ast \rho(x) .\]
\end{proposition}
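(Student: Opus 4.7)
The plan is to derive the required commutativity at the level of the holonomy representation from the affine conjugation relation that is built into the notion of an affine homotopy lift. Indeed, by the discussion surrounding Theorem~\ref{leemaps}, the affine homotopy lift $(\dd,\D)$ of $f$ comes together with a morphism $\varphi : \Gamma \to \Gamma$ (namely $\tilde f_\ast$ for a chosen lift $\tilde f$ of $f$) satisfying
\[
\varphi(\gamma)\,(\dd,\D) \;=\; (\dd,\D)\,\gamma \qquad \text{for all } \gamma \in \Gamma.
\]
I would take this identity in $\aff(G)$ as the starting point and extract from it the information that concerns only the automorphism components of the semidirect-product decomposition, since those are exactly what feeds into $\rho$.

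Concretely, for an arbitrary $x \in F$, using $F = p(\Gamma)$ I would pick a lift $\gamma = (a, x) \in \Gamma$ and write $\varphi(\gamma) = (b, \beta) \in \Gamma$, so that $\beta = p(\varphi(\gamma)) \in F$ automatically. Multiplying out both sides of the conjugation relation inside $\aff(G)$ yields
\[
(b,\beta)(\dd,\D) = \bigl(b\,\beta(\dd),\,\beta \circ \D\bigr), \qquad (\dd,\D)(a,x) = \bigl(\dd\,\D(a),\,\D \circ x\bigr),
\]
and comparing the second coordinates produces the key identity $\beta \circ \D = \D \circ x$ in $\Endo(G)$. Applying the differential, which is an isomorphism $\Endo(G) \to \Endo(\lie)$ because $\exp : \lie \to G$ is a diffeomorphism, this turns into $\rho(\beta)\,\D_\ast = \D_\ast\,\rho(x)$. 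Setting $\phi(x) := \beta$ for each $x \in F$ then defines the required function $\phi : F \to F$.

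I do not expect any single step to be a genuine obstacle; the argument is really a careful unpacking of the semidirect-product multiplication followed by a passage to the Lie algebra. The only point that needs a moment's care is that the construction depends on the choice of lift $\gamma \in p^{-1}(x)$, and different lifts may in principle yield different $\beta$'s. But since the proposition only asserts the existence of \emph{some} $\phi(x)$ satisfying the equation, it suffices to fix one lift for each $x \in F$, and the above recipe then produces the desired $\phi$.
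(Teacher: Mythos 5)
Your argument is correct and is exactly the standard proof of this statement: the paper itself omits the proof (deferring to \cite{ddp11-1}), but the intended argument is precisely this unpacking of the conjugation relation $\varphi(\gamma)(\dd,\D)=(\dd,\D)\gamma$ in $\aff(G)$, comparing the $\Endo(G)$-components and passing to differentials. Your remark that $\phi$ need only be a function (fixing one lift of each $x\in F$), not a homomorphism, is also the right point of care and matches the paper's own warning after the proposition.
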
  

It might be tempting to believe that the function $\phi$ can always be chosen as a morphism of groups. However, as an example in \cite{ddp11-1} shows, this does not need to be the case.

\medskip

In \cite{dd13-2}, one can find the following proposition, which is a straightforward generalization of a result from \cite{ddp11-1}.

\begin{proposition}\label{decomprho}
Let $\rho:F\to \GL_n(\R)$ be a representation of a finite group $F$ and $\phi: F\to F$ be any function. Let $D\in \R^{n\times n}$. Suppose that $\rho(\phi(x))D=D\rho(x)$ for all $x\in F$. Then we can choose a basis of $\R^n$, such that $\rho=\rho_{\leq 1}\oplus \rho_{> 1}$, for representations $\rho_{\leq 1}:F\to \GL_{n_{\leq 1}}(\R)$ and $\rho_{> 1}:F\to \GL_{n_{> 1}}(\R)$ and such that $D$ can be written in block triangular form$$\left(\begin{array}{cc}
D_{\leq 1} & \ast \\
0 & D_{>1}
\end{array}\right),$$where $D_{\leq 1}$ and $D_{>1}$ only have eigenvalues of modulus $\leq 1$ and $>1$, respectively. 
\end{proposition}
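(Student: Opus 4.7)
The plan is to identify a $D$-invariant subspace $V_{\leq 1}$ of $\R^n$ on which $D$ has only eigenvalues of modulus $\leq 1$, show that this subspace is $\rho$-invariant despite the twisted commutation with $D$, and then appeal to Maschke's theorem to find a $\rho$-invariant complement. Writing $D$ in a basis adapted to the resulting splitting will then give the desired block upper triangular form.

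First I would let $V_{\leq 1} \subseteq \R^n$ be the real subspace whose complexification is the sum of the generalized eigenspaces of $D$ for complex eigenvalues $\lambda$ with $|\lambda| \leq 1$ (conjugate pairs are grouped together, so the result really is defined over $\R$). By construction $V_{\leq 1}$ is $D$-invariant and $D|_{V_{\leq 1}}$ has only eigenvalues of modulus $\leq 1$. The convenient dynamical characterization is
\[
V_{\leq 1} = \{v \in \R^n : \limsup_{k \to \infty} \|D^k v\|^{1/k} \leq 1\},
\]
since a non-zero component of $v$ on a generalized eigenspace for $|\lambda|>1$ forces the $\limsup$ to exceed $1$. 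Iterating the hypothesis $\rho(\phi(x)) D = D \rho(x)$ then yields
\[
D^k \rho(x) = \rho(\phi^k(x)) D^k \qquad (k \geq 1),
\]
where $\phi^k$ denotes the $k$-fold composition and still takes values in the finite set $F$. Setting $C := \max_{g \in F} \|\rho(g)\| < \infty$, for every $v \in V_{\leq 1}$ and every $x \in F$ we obtain
\[
\|D^k \rho(x) v\| = \|\rho(\phi^k(x)) D^k v\| \leq C\,\|D^k v\|,
\]
so $\limsup_k \|D^k \rho(x) v\|^{1/k} \leq 1$, i.e.\ $\rho(x) v \in V_{\leq 1}$. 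Thus $V_{\leq 1}$ is $\rho$-invariant. By Maschke's theorem (concretely, average an arbitrary inner product on $\R^n$ over $\rho(F)$ to obtain a $\rho$-invariant one and take the orthogonal complement) there is a $\rho$-invariant subspace $W$ with $\R^n = V_{\leq 1} \oplus W$; setting $\rho_{\leq 1} := \rho|_{V_{\leq 1}}$ and $\rho_{>1} := \rho|_W$ gives $\rho = \rho_{\leq 1} \oplus \rho_{>1}$. In a basis listing $V_{\leq 1}$ first, the inclusion $D(V_{\leq 1}) \subseteq V_{\leq 1}$ makes $D$ block upper triangular; the top-left block is $D|_{V_{\leq 1}}$ with eigenvalues of modulus $\leq 1$, while the bottom-right block is the operator induced by $D$ on $\R^n/V_{\leq 1}$, whose eigenvalues are the remaining eigenvalues of $D$, namely those of modulus $>1$.

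The step I expect to be the main obstacle is the $\rho$-invariance of $V_{\leq 1}$, since $\rho$ and $D$ do not commute but only satisfy the twisted intertwining relation through the function $\phi$, which is not even assumed to be a homomorphism. The key point is that although iterating this relation substitutes $\phi^k(x)$ for $x$, the iterates still lie in the finite set $F$, so the factors $\rho(\phi^k(x))$ are uniformly norm-bounded by a constant independent of $k$ and $x$. That uniform bound is exactly what is needed to compare the exponential growth rates of $\|D^k v\|$ and $\|D^k \rho(x) v\|$, which is the only feature of $V_{\leq 1}$ used to detect membership.
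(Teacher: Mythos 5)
Your proof is correct. The paper itself does not prove this proposition (it is quoted from the earlier papers of Dekimpe--Dugardein and Dekimpe--De Rock--Penninckx), but your argument --- showing the sum $V_{\leq 1}$ of generalized eigenspaces for $|\lambda|\leq 1$ is $\rho$-invariant by iterating the intertwining relation to get $D^k\rho(x)=\rho(\phi^k(x))D^k$ and using that $\|\rho(g)\|$ is uniformly bounded over the finite group $F$, then splitting off a $\rho$-invariant complement by averaging an inner product --- is precisely the standard argument used there, and all the steps (including the growth-rate characterization of $V_{\leq 1}$ and the identification of the eigenvalues of the induced map on the quotient) check out.
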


Because of Proposition \ref{semi-affconj}, we know that when $(\dd,\D)$ is an affine homotopy lift of a map $f:M\to M$ on an infra-nilmanifolds, the conditions of Proposition \ref{decomprho} are met. This means that there exists a basis of $\R^n$, such that $\D_\ast$ has a block upper triangular form, while $\A_\ast$ has a block diagonal form, for every $\A\in F$, simultaneously. This decomposition of the holonomy representation will be called the decomposition of $\rho$ induced by $\D$.

\begin{theorem}[\cite{dd13-2}]\label{positivepart}
Let $M=\Gamma\backslash G$ be an infra-nilmanifold. Let $p:\Gamma\ra F$ denote the 
projection of $\Gamma$ onto its holonomy group and denote the holonomy representation by $\rho:F\to \GL_n(\lie)$. 
Take an arbitrary self-map $f:M\to M$ with $(\dd,\D)\in \aff(G)$ as affine homotopy lift. Let $\rho=\rho_{\leq 1} \oplus \rho_{>1}$ be the decomposition of $\rho$ induced by $\D$. Then the set
\[ \Gamma_+ = \{ \gamma \in \Gamma\;|\; \det(\rho_{>1}(p(\gamma))) =1 \}\]
is a normal subgroup of $\Gamma$ of index 1 or 2. Therefore, $\Gamma_+$ is also an almost-Bieberbach group and the corresponding infra-nilmanifold $M_+= \Gamma_+\backslash G$ is either equal to $M$ or to a 2-fold covering of $M$. In the second case, the map $f$ lifts to a map $f_+:M_+\to M_+$ with the same affine homotopy lift $(\dd,\D)$ as $f$.
\end{theorem}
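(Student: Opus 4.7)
The plan is to exhibit $\Gamma_+$ as the kernel of a sign character $\chi: \Gamma \to \{\pm 1\}$, to check that a subgroup of index at most $2$ inherits the almost-Bieberbach property, and then to use the defining identity of the associated morphism $\varphi$ together with the invertibility of $\D_{>1}$ to show that $\varphi(\Gamma_+) \subseteq \Gamma_+$, so that $(\dd, \D)$ descends to $M_+$.

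First I would set $\chi(\gamma) := \det(\rho_{>1}(p(\gamma)))$. Since $F$ is finite, the element $\rho_{>1}(p(\gamma))$ has finite order, so $\chi(\gamma)$ is a real root of unity and therefore lies in $\{\pm 1\}$. As $p$, $\rho_{>1}$ and $\det$ are all group homomorphisms, so is $\chi$, and hence $\Gamma_+ = \ker \chi$ is a normal subgroup of index $1$ or $2$. To confirm that $\Gamma_+$ is almost-Bieberbach, note that it inherits torsion-freeness from $\Gamma$, that $p(\Gamma_+) \subseteq F$ is finite, and that $\Gamma_+ \cap G$ has index at most $2$ in $\Gamma \cap G$, hence is still uniform and discrete in $G$. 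The covering $M_+ \to M$ then has degree $[\Gamma : \Gamma_+] \in \{1, 2\}$, giving the first two assertions.

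For the lifting statement, suppose $[\Gamma : \Gamma_+] = 2$ and let $\varphi : \Gamma \to \Gamma$ be the homomorphism associated to $(\dd, \D)$ via Theorem \ref{leemaps}. Writing $\gamma = (g, \alpha)$ and $\varphi(\gamma) = (h, \beta)$ and expanding the identity $\varphi(\gamma)(\dd,\D) = (\dd,\D)\gamma$ in $\aff(G)$, comparison of the $\Endo(G)$-components yields $p(\varphi(\gamma)) \circ \D = \D \circ p(\gamma)$. Passing to differentials and working in the basis induced by $\D$ (so that $\D_\ast$ is block upper triangular and each $\rho(\A)$ is block diagonal), the bottom-right block of this identity reads
\[ \rho_{>1}(p(\varphi(\gamma)))\, \D_{>1} = \D_{>1}\, \rho_{>1}(p(\gamma)). \]
The crucial observation is that $\D_{>1}$ has only eigenvalues of modulus $> 1$ and is in particular invertible, so taking determinants gives $\chi(\varphi(\gamma)) = \chi(\gamma)$. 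Hence $\varphi(\Gamma_+) \subseteq \Gamma_+$, which means that $(\dd, \D)$ descends to a self-map $f_+$ of $M_+$ that is a lift of $f$ (up to homotopy) and admits $(\dd, \D)$ itself as affine homotopy lift.

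The main obstacle is precisely this last step: the function $\phi$ of Proposition \ref{semi-affconj} need not be a homomorphism, so one must avoid any argument that treats it as one. Working directly with the matrix identity above and exploiting the invertibility of $\D_{>1}$ circumvents this issue and is what ultimately makes the determinant compatible with $\varphi$.
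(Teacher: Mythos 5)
Your argument is correct. The paper itself imports Theorem \ref{positivepart} from \cite{dd13-2} without reproving it, but your proof is exactly the intended one: it relies on precisely the machinery the paper sets up (the conjugation identity underlying Proposition \ref{semi-affconj} and the block decomposition of Proposition \ref{decomprho}), and your key step --- extracting $\rho_{>1}(p(\varphi(\gamma)))\,\D_{>1}=\D_{>1}\,\rho_{>1}(p(\gamma))$ directly from $\varphi(\gamma)(\dd,\D)=(\dd,\D)\gamma$ and cancelling the invertible block $\D_{>1}$ after taking determinants, rather than routing through the possibly non-homomorphic function $\phi$ --- is the correct way to make $\chi\circ\varphi=\chi$ and hence $\varphi(\Gamma_+)\subseteq\Gamma_+$ go through.
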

When $\D_\ast$ has no eigenvalues of modulus $>1$, we will automatically assume that $\Gamma_+=\Gamma$.

From now on, $\Gamma_+$ will be called the positive part of $\Gamma$ with respect to $f$ and $f_+$ is the positive part of $f$.

\medskip
It is straightforward to see that for any $k\in \N$, $(\dd,\D)^k$ will be an affine homotopy lift of $f^k$. Therefore, the decomposition of $\rho$ into a direct sum $\rho=\rho_{\leq 1} \oplus \rho_{>1}$ is independent of the iteration $k$. Hence, the positive part $\Gamma_+$ of $\Gamma$ with respect to $f^k$ 
is the same as the positive part of $\Gamma$ with respect to $f^k$ for any $k\in \N$. Furthermore, also $(f_+)^k=(f^k)_+$.

\begin{theorem}[\cite{dd13-2}]\label{NielsenLefschetz}
Let $M=\Gamma\backslash G$ be an infra-nilmanifold and let $f:M\to M$ be a self-map with affine homotopy lift $(\dd,\D)$. Let $p$ denote the number of positive real eigenvalues of $\D_\ast$ which are strictly greater than $1$ and 
let $n$ denote the number of negative real eigenvalues of $\D_\ast$ which are strictly less than $-1$. Then we have the following table of equations:
\renewcommand{\arraystretch}{1.6}
\begin{center}\begin{tabular}{ |c|c|c| }
\cline{2-3}
\multicolumn{1}{c}{}
 &  \multicolumn{1}{|c|}{$\Gamma=\Gamma_+$}
 & \multicolumn{1}{|c|}{$\Gamma\neq \Gamma_+$} \\
\cline{1-3}
$k$ odd & $N(f^k)=(-1)^p L(f^k)$ & $N(f^k)=(-1)^{p} (L(f_+^k)-L(f^k))$ \\[1ex]
\cline{1-3}
$k$ even & $N(f^k)=(-1)^{p+n} L(f^k)$ & $N(f^k)=(-1)^{p+n} (L(f_+^k)-L(f^k))$ \\[1ex]
\cline{1-3}
\end{tabular}
 \end{center}
\end{theorem}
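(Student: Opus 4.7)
The plan is to apply Theorem~\ref{LeeForm} to $f^k$, whose affine homotopy lift is $(\dd,\D)^k$, and then to compare, term by term, the summands $\det(I-\A_\ast\D_\ast^k)$ in the Lefschetz formula with the summands $|\det(I-\A_\ast\D_\ast^k)|$ in the Nielsen formula. The key structural input is Proposition~\ref{decomprho}: in a suitable common basis, $\A_\ast=\A_\ast^{\leq 1}\oplus\A_\ast^{>1}$ for every $\A\in F$, while $\D_\ast$ is block upper triangular with diagonal blocks $\D_\ast^{\leq 1}$ and $\D_\ast^{>1}$. Consequently,
\[
\det(I-\A_\ast\D_\ast^k)=\det(I-\A_\ast^{\leq 1}\D_\ast^{\leq 1,k})\cdot\det(I-\A_\ast^{>1}\D_\ast^{>1,k}),
\]
and the first factor should be non-negative. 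Summands where it vanishes contribute $0$ to both sums, so the sign of the product is controlled by the second factor alone.

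For the $>1$-block, set $M=\A_\ast^{>1}\D_\ast^{>1,k}$. All eigenvalues of $M$ have modulus $>1$, so $M$ is invertible, $\det(I-M^{-1})>0$, and the identity $\det(I-M)=(-1)^{\dim M}\det M\cdot\det(I-M^{-1})$ gives
\[
\mathrm{sgn}\det(I-M)=(-1)^{\dim M}\det\rho_{>1}(\A)\,(\mathrm{sgn}\det\D_\ast^{>1})^{k}.
\]
Here $\dim M\equiv p+n\pmod 2$ (complex eigenvalues of $\D_\ast$ come in conjugate pairs), $\det\A_\ast^{>1}=\det\rho_{>1}(\A)\in\{\pm 1\}$, and $\mathrm{sgn}\det\D_\ast^{>1}=(-1)^n$, since the negative real eigenvalues of $\D_\ast$ of modulus $>1$ are exactly the $n$ eigenvalues below $-1$. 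Splitting on the parity of $k$ yields
\[
\mathrm{sgn}\det(I-\A_\ast\D_\ast^k)=\begin{cases}(-1)^{p}\det\rho_{>1}(\A)&k\text{ odd},\\(-1)^{p+n}\det\rho_{>1}(\A)&k\text{ even},\end{cases}
\]
which is the only non-trivial sign computation in the argument.

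The four entries of the table then follow by bookkeeping. If $\Gamma=\Gamma_+$ then $\det\rho_{>1}(\A)=1$ for all $\A\in F$, so the sign above is independent of $\A$ and can be pulled out of the Nielsen sum, giving $N(f^k)=\pm L(f^k)$ with the sign in the first column. If $\Gamma\neq\Gamma_+$, the holonomy group $F_+=p(\Gamma_+)$ is the index-$2$ subgroup of $F$ on which $\det\rho_{>1}=1$; the sign is correct on $F_+$ and flips on $F\setminus F_+$. Splitting the Nielsen sum accordingly and using $L(f_+^k)=\frac{2}{\#F}\sum_{\A\in F_+}\det(I-\A_\ast\D_\ast^k)$ produces $N(f^k)=\pm(L(f_+^k)-L(f^k))$ as stated. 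The main obstacle I anticipate is precisely the first step: showing that $\det(I-\A_\ast^{\leq 1}\D_\ast^{\leq 1,k})\geq 0$, since $\A_\ast^{\leq 1}$ and $\D_\ast^{\leq 1}$ need not commute, so one has to exploit the semi-equivariance $\rho_{\leq 1}(\phi(x))\D_\ast^{\leq 1}=\D_\ast^{\leq 1}\rho_{\leq 1}(x)$ from Proposition~\ref{semi-affconj} together with the existence of an $F$-invariant inner product. Beyond that, the proof is pure bookkeeping around the determinant identity above.
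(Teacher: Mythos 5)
Your overall strategy --- factor each summand of the averaging formulas of Theorem~\ref{LeeForm} according to the decomposition $\rho=\rho_{\leq 1}\oplus\rho_{>1}$ of Proposition~\ref{decomprho}, show the $\leq 1$ factor is non-negative, extract the sign of the $>1$ factor via $\det(I-M)=(-1)^{\dim M}\det M\cdot\det(I-M^{-1})$, and then split the Nielsen sum over $F_+=p(\Gamma_+)$ versus its complement --- is exactly the route taken in \cite{dd13-2} (this paper only cites that result, so there is no in-text proof to compare against), and your sign bookkeeping ($(-1)^{\dim M}=(-1)^{p+n}$, $\mathrm{sgn}\det\D_\ast^{>1}=(-1)^n$, and $\sum_F\det\rho_{>1}(\A)\det(\cdots)=2\sum_{F_+}-\sum_F$) is correct. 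The point to tighten is that \emph{both} spectral claims, not only the one you flag, genuinely require the semi-equivariance of Proposition~\ref{semi-affconj}: the assertion that every eigenvalue of $M=\A_\ast^{>1}\D_\ast^{>1,k}$ has modulus $>1$ is false for a general product of a finite-order matrix with an expanding one. For instance, $\A=\left(\begin{smallmatrix}0&1\\-1&0\end{smallmatrix}\right)$ and $D=\left(\begin{smallmatrix}1.1&100\\0&1.1\end{smallmatrix}\right)$ give $\A D$ an eigenvalue of modulus less than $1$, so this step is not generic linear algebra and cannot simply be asserted.

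The mechanism that rescues both steps is the identity
\[
(\rho(\A)\D_\ast)^m=\rho(\A)\,\rho(\phi(\A))\cdots\rho(\phi^{m-1}(\A))\,\D_\ast^m,
\]
obtained by repeatedly applying $\D_\ast\rho(x)=\rho(\phi(x))\D_\ast$. The prefactor ranges over the finite set $\rho(F)$, hence is bounded together with its inverse (here your $F$-invariant inner product is a convenient way to see this, since it makes $\rho(F)$ orthogonal). Taking $m$-th roots of norms shows the spectral radius of $\rho_{\leq 1}(\A)\D_\ast^{\leq 1}$ is at most that of $\D_\ast^{\leq 1}$, i.e.\ $\leq 1$; applying the same identity to inverses on the invertible $>1$ block shows the spectral radius of $(\rho_{>1}(\A)\D_\ast^{>1})^{-1}$ is strictly less than $1$, which is exactly the eigenvalue claim you need for $M$. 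Non-negativity of $\det(I-\A_\ast^{\leq 1}\D_\ast^{\leq 1,k})$ is then immediate: for a real matrix all of whose eigenvalues $\mu_i$ satisfy $|\mu_i|\leq 1$, the product $\prod_i(1-\mu_i)$ is non-negative because complex eigenvalues pair into factors $|1-\mu|^2\geq 0$ and real eigenvalues lie in $[-1,1]$. With these two lemmas inserted, your argument is complete and coincides with the proof in \cite{dd13-2}.
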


By using some straightforward calculations, we can immediately prove the following theorem, which is the main result for computing Nielsen zeta functions further in this paper. 

\begin{theorem}[\cite{dd13-2}]\label{Nielsenzeta}
Let $M=\Gamma\backslash G$ be an infra-nilmanifold and let $f:M\to M$ be a self-map with affine homotopy lift $(\dd,\D)$. Let $p$ denote the number of positive real eigenvalues of $\D_\ast$ which are strictly greater than $>1$ and 
let $n$ denote the number of negative real eigenvalues of $\D_\ast$ which are strictly less than $-1$. Then we have the following table of equations:
\renewcommand{\arraystretch}{1.9}
\begin{center}\begin{tabular}{ |c|c|c|c|c| }
\cline{2-5}
\multicolumn{1}{l}{ }
 &  \multicolumn{1}{|c|}{$p$ even, $n$ even}
 & \multicolumn{1}{|c|}{$p$ even, $n$ odd}& \multicolumn{1}{|c|}{$p$ odd, $n$ even}& \multicolumn{1}{|c|}{$p$ odd, $n$ odd} \\
\cline{1-5}
$\Gamma= \Gamma_+$ & $N_f(z)= L_f(z) $& $N_f(z)= \ds \frac{1}{L_{f}(-z)} $ &$N_f(z)= \ds \frac{1}{L_{f}(z)}$ & $N_f(z)= L_f(-z) $\\[1ex]
\cline{1-5}
$\Gamma\neq \Gamma_+$ &$N_f(z)= \ds \frac{L_{f_+}(z)}{L_{f}(z)}$ & $N_f(z)= \ds \frac{L_{f}(-z)}{L_{f_+}(-z)}$ &$N_f(z)= \ds \frac{L_{f}(z)}{L_{f_+}(z)}$ & $N_f(z)= \ds \frac{L_{f_+}(-z)}{L_{f}(-z)}$ \\[1ex]
\cline{1-5}
\end{tabular}
 \end{center}
 
\end{theorem}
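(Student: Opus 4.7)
The plan is to take the logarithm of both sides of each formula in the table and reduce everything to a direct comparison of the power series $\sum_k N(f^k) z^k / k$ and $\sum_k L(f^k) z^k / k$ (possibly together with the analogous series for $f_+$), using Theorem \ref{NielsenLefschetz} to replace $N(f^k)$ by a signed Lefschetz number. The main identity I would use repeatedly is
\[ \log L_f(-z) = \sum_{k=1}^{\infty} \frac{(-1)^k L(f^k)}{k} z^k, \]
so that substituting $-z$ in a Lefschetz zeta function is exactly the operation of inserting a factor $(-1)^k$ in the $k$-th coefficient. That single observation is what allows the eight cases to collapse into $L_f(z)$, $L_f(-z)$, $1/L_f(z)$, $1/L_f(-z)$ and their ratios with $L_{f_+}$.

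First I would record, from Theorem \ref{NielsenLefschetz}, the uniform formula
\[ N(f^k) \;=\; \varepsilon_k \cdot \bigl(L(f_+^k) - L(f^k)\bigr) \quad \text{if } \Gamma \neq \Gamma_+, \qquad N(f^k) \;=\; \varepsilon_k \cdot L(f^k) \quad \text{if } \Gamma = \Gamma_+, \]
where $\varepsilon_k = (-1)^p$ for $k$ odd and $\varepsilon_k = (-1)^{p+n}$ for $k$ even. The parity of $p$ alone controls the sign on odd indices, and the parity of $n$ controls whether the sign flips or not between odd and even $k$. In particular, when $n$ is even the sign $\varepsilon_k$ is independent of $k$, and when $n$ is odd the sign $\varepsilon_k$ equals $(-1)^{p}\cdot (-1)^{k-1}$, i.e.\ it alternates.

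Next I would compute $\log N_f(z)$ in each of the eight cases by plugging this in, splitting the sum over $k$ odd and $k$ even when $n$ is odd, and rewriting any alternating series via the identity above. For the four rows where $\Gamma = \Gamma_+$, one gets $\log N_f(z) = \pm \log L_f(\pm z)$ with the four sign patterns matching exactly the four entries in the first row of the table. For the four rows where $\Gamma \neq \Gamma_+$, the calculation is identical except that each $L(f^k)$ is replaced by $L(f_+^k) - L(f^k)$, so that $\log L_f$ becomes $\log L_{f_+} - \log L_f$ and one reads off a ratio of two Lefschetz zeta functions evaluated at $z$ or at $-z$.

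There is no real obstacle: the proof is pure bookkeeping, and the only point where one has to be careful is that in the $n$-odd cases the series $\sum_k N(f^k) z^k / k$ has coefficients whose sign depends on the parity of $k$, and one must correctly identify whether the resulting series is $-\log L_f(-z)$ or $+\log L_f(-z)$ (equivalently, whether the overall factor $(-1)^p$ multiplies the alternating pattern by $+1$ or $-1$). Once the eight cases are lined up with the eight table entries and one exponentiates, the theorem follows.
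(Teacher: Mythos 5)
Your proposal is correct and is essentially the paper's own route: the paper derives Theorem \ref{Nielsenzeta} directly from the sign table in Theorem \ref{NielsenLefschetz} by exponentiating $\sum_k N(f^k)z^k/k$ and using that the substitution $z\mapsto -z$ inserts the factor $(-1)^k$, exactly as you do. Your sign bookkeeping in all eight cases checks out (and the identification $(f_+)^k=(f^k)_+$ that you implicitly need is established in the paper just before Theorem \ref{NielsenLefschetz}).
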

Note that the case $\Gamma=\Gamma_+$ was proved independently in \cite{fl13-1}. It can be easily derived from Theorem \ref{NielsenLefschetz}, that this case coincides with the equality $N(f)=|L(f)|$, which was exactly the assumption that was made in \cite{fl13-1}. When this assumption holds, we say that $f$ meets the Anosov relation. If we can find conditions that tell us whether the Anosov relation holds, then this will decrease the number of calculations we need to do, since in this case we do not have to consider $\Gamma\neq \Gamma_+$. 

\medskip

An immediate corollary of Theorem \ref{positivepart} and Theorem \ref{NielsenLefschetz} is the following.
\begin{corollary}\label{AnCor}
Let $M=\Gamma\backslash G$ be an infra-nilmanifold and let $f:M\to M$ be a self-map with affine homotopy lift $(\dd,\D)$. Denote the holonomy representation by $\rho$. Let $\rho=\rho_{\leq 1} \oplus \rho_{>1}$ be the decomposition of $\rho$ induced by $\D$. If $\rho_{>1}$ is the identity representation or if $\rho=\rho_{\leq 1}$, then $N(f)=|L(f)|$.
\end{corollary}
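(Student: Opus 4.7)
The plan is to reduce the corollary to Theorem~\ref{NielsenLefschetz} by verifying that under either hypothesis we are in the $\Gamma=\Gamma_+$ column, so that applying the theorem with $k=1$ collapses the formula $N(f)=(-1)^pL(f)$ to $|L(f)|$.

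First I would deal with the two cases separately to check $\Gamma=\Gamma_+$. If $\rho=\rho_{\leq 1}$, then $\D_\ast$ has no eigenvalues of modulus strictly greater than $1$, and the convention stated immediately after Theorem~\ref{positivepart} sets $\Gamma_+=\Gamma$. If instead $\rho_{>1}$ is the identity representation, then for every $\gamma\in\Gamma$ one has $\rho_{>1}(p(\gamma))=I$, so that $\det(\rho_{>1}(p(\gamma)))=1$; by the very definition of $\Gamma_+$ in Theorem~\ref{positivepart}, this forces $\Gamma_+=\Gamma$ again.

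Next I apply Theorem~\ref{NielsenLefschetz} to the map $f$ itself (the case $k=1$, which is odd) in the column $\Gamma=\Gamma_+$. This yields $N(f)=(-1)^p L(f)$, where $p$ is the number of positive real eigenvalues of $\D_\ast$ that are strictly greater than $1$. Since $N(f)$ is a count of essential fixed point classes it is non-negative, so taking absolute values of both sides gives
\[ N(f)=|N(f)|=\bigl|(-1)^p L(f)\bigr|=|L(f)|, \]
which is the desired equality. There is no genuine obstacle here: the whole content of the corollary is the observation that both hypotheses independently trigger $\Gamma=\Gamma_+$, after which the parity sign in Theorem~\ref{NielsenLefschetz} is harmlessly absorbed by the absolute value.
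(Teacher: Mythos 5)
Your proposal is correct and follows exactly the route the paper intends: the paper presents this as an immediate consequence of Theorem~\ref{positivepart} and Theorem~\ref{NielsenLefschetz}, and your argument (both hypotheses force $\Gamma=\Gamma_+$ via the definition of $\Gamma_+$ resp.\ the stated convention, after which the sign $(-1)^p$ is absorbed by non-negativity of $N(f)$) is precisely that deduction made explicit.
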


The following is one of the most famous results in Nielsen theory. It was proved separately by Anosov \cite{anos85-1} and Fadell and Husseini \cite{fh86-1}. 

\begin{theorem}\label{AnNil}
Let $f: N\to N$ be a self-map on a nilmanifold, then $N(f)=|L(f)|$.
\end{theorem}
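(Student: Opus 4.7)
The plan is to observe that a nilmanifold is exactly the special case of an infra-nilmanifold whose holonomy group $F$ is trivial, and then simply feed this into Theorem~\ref{LeeForm}. Indeed, as recalled just before Theorem~\ref{leemaps}, when $F$ is trivial one has $\Gamma = \Gamma \cap G$, so $\Gamma$ is a lattice in $G$ and $\Gamma\backslash G$ is a nilmanifold; conversely every nilmanifold arises this way.

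Given $f:N\to N$, I would first pick an affine homotopy lift $(\dd,\D)$, which exists by Theorem~\ref{leemaps} (applied with $\Gamma'=\Gamma$). Since $F = \{I\}$, both sums in Theorem~\ref{LeeForm} consist of a single term with $\A = I$, so
\[
L(f) = \det(I - \D_\ast)\qquad\text{and}\qquad N(f) = |\det(I - \D_\ast)|,
\]
and the identity $N(f) = |L(f)|$ is immediate. Alternatively, one can invoke Corollary~\ref{AnCor}: the holonomy representation $\rho$ of the trivial group is the identity (trivially), so the hypothesis ``$\rho = \rho_{\leq 1}$ or $\rho_{>1}$ is the identity representation'' is vacuously satisfied, and the conclusion follows.

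There is no real obstacle here; the entire content of the statement has been absorbed into the Lee--Lee formula of Theorem~\ref{LeeForm}, and the only thing to check is the translation of ``nilmanifold'' into ``$F$ is trivial.'' The historical difficulty of the Anosov--Fadell--Husseini result lies in establishing a formula of this type in the first place, not in the final one-line calculation once Theorem~\ref{LeeForm} is available.
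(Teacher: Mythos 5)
Your deduction is formally correct, but note that the paper does not prove Theorem~\ref{AnNil} at all: it is quoted as a classical black box, with citations to the original (and quite different) proofs of Anosov and of Fadell--Husseini. So your ``proof'' is genuinely a different route --- namely, specializing the Lee--Lee averaging formula of Theorem~\ref{LeeForm} to trivial holonomy, where the single term $\A=I$ gives $L(f)=\det(I-\D_\ast)$ and $N(f)=|\det(I-\D_\ast)|$ at once. The translation ``nilmanifold $\Leftrightarrow$ trivial holonomy'' is exactly as the paper sets it up, and your alternative via Corollary~\ref{AnCor} also goes through. The one caveat worth making explicit: as a self-contained proof of the Anosov--Fadell--Husseini theorem this is circular in spirit, since the averaging formula of \cite{ll09-1} is a later generalization whose own proof rests on the nilmanifold fixed-point-class theory (in particular on knowing that the index of a fixed point class is governed by $\det(I-\A_\ast\D_\ast)$, which in the trivial-holonomy case \emph{is} essentially the content of Theorem~\ref{AnNil}). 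You gesture at this in your last sentence, which is the right instinct; within the logical architecture of this particular paper, where Theorem~\ref{LeeForm} is imported as given, your one-line derivation is a legitimate and arguably cleaner way to see why the nilmanifold case is listed among the ``conditions under which the Anosov relation holds.''
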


The following two results are more recent and give certain conditions under which the Anosov relation holds for infra-nilmanifolds.

\begin{theorem}[Dekimpe, De Rock, Malfait \cite{ddm04-3}]\label{AnCycl}
Let $f:M\to M$ be a self-map on an infra-nilmanifold. Suppose that the holonomy group $F$ of $M$ is cyclic, with $x_0\in F$ as generator and $\rho: F\to \GL_n(\lie)$ as holonomy representation. If $\rho(x_0)$ has no eigenvalues equal to $-1$, then $N(f)=|L(f)|$.
\end{theorem}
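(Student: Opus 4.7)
The overall plan is to verify the hypothesis $\Gamma_+=\Gamma$ of Theorem \ref{NielsenLefschetz}, from which the case $k=1$ yields $N(f)=(-1)^p L(f)=\pm L(f)$, and since the Nielsen number is non-negative this forces $N(f)=|L(f)|$. Let $(\dd,\D)$ be an affine homotopy lift of $f$ and let $\rho=\rho_{\leq 1}\oplus\rho_{>1}$ be the decomposition of $\rho$ induced by $\D$, supplied by Proposition \ref{semi-affconj} together with Proposition \ref{decomprho}. By the definition in Theorem \ref{positivepart}, one has $\Gamma_+=\Gamma$ precisely when $\det(\rho_{>1}(x))=1$ for every $x\in F$.

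The cyclicity of $F=\langle x_0\rangle$ gives the first simplification: since $\det\circ\rho_{>1}:F\to\{\pm 1\}$ is a group homomorphism, the whole condition reduces to checking $\det(\rho_{>1}(x_0))=1$. I would then analyze $\rho_{>1}(x_0)$ through its eigenvalues. It is a real matrix of order dividing $\# F$, so its complex eigenvalues are roots of unity and, the matrix being real, they occur either in complex conjugate pairs $\{\lambda,\bar\lambda\}$ with $\lambda\bar\lambda=1$, or as real roots of unity, which must be $\pm 1$. Hence $\det(\rho_{>1}(x_0))=(-1)^m$, where $m$ is the multiplicity of $-1$ as an eigenvalue of $\rho_{>1}(x_0)$.

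Because $\rho=\rho_{\leq 1}\oplus\rho_{>1}$ is a genuine direct sum of representations (all matrices $\rho(\A)$ being simultaneously block-diagonal in the basis of Proposition \ref{decomprho}), the eigenvalues of $\rho_{>1}(x_0)$ form a subcollection of the eigenvalues of $\rho(x_0)$. The hypothesis that $\rho(x_0)$ has no eigenvalue equal to $-1$ therefore forces $m=0$, so $\det(\rho_{>1}(x_0))=1$, which yields $\Gamma_+=\Gamma$ and completes the argument.

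The main obstacle is conceptual rather than computational: one has to identify $\rho_{>1}$ as a genuine subrepresentation, so that the hypothesis on eigenvalues of $\rho(x_0)$ transfers to eigenvalues of $\rho_{>1}(x_0)$, and one has to exploit cyclicity at the right moment to reduce a global statement about $\det\circ\rho_{>1}$ to the single generator $x_0$. Once these two structural observations are in place, what remains is only the elementary determinant computation for finite-order real matrices.
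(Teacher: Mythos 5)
Your argument is correct, but note that the paper does not actually prove Theorem \ref{AnCycl}: it is imported without proof from the cited reference of Dekimpe, De Rock and Malfait, whose original argument works directly with the averaging formula of Theorem \ref{LeeForm}, showing that under the eigenvalue hypothesis all summands $\det(I-\A_\ast\D_\ast)$ carry the same sign. What you do instead is rederive the result from the later machinery quoted in Section 3: you check the defining condition of $\Gamma_+$ from Theorem \ref{positivepart} and then invoke Theorem \ref{NielsenLefschetz} with $k=1$. Each step holds up --- $\det\circ\rho_{>1}$ is indeed a homomorphism into $\{\pm 1\}$ so cyclicity reduces everything to $x_0$; the finite-order real matrix $\rho_{>1}(x_0)$ has determinant $(-1)^m$ with $m$ the multiplicity of the eigenvalue $-1$; and since Proposition \ref{decomprho} gives a genuine direct-sum decomposition $\rho=\rho_{\leq 1}\oplus\rho_{>1}$, the eigenvalues of $\rho_{>1}(x_0)$ are among those of $\rho(x_0)$, so the hypothesis kills $m$. (The degenerate case where $\D_\ast$ has no eigenvalue of modulus $>1$ is covered by the paper's convention $\Gamma_+=\Gamma$.) The trade-off is that your proof is shorter and fits the framework of this paper, but it rests on the full strength of the results of \cite{dd13-2}, which are much heavier than what the original direct sign argument requires; there is no circularity, though, since those results are established independently of Theorem \ref{AnCycl}.
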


\begin{theorem}[Dekimpe, De Rock, Penninckx \cite{ddp11-1}]\label{An2P}
Let $F$ be a finite group. Then the Anosov relation holds for every self-map on any infra-nilmanifold with holonomy group $F$ if and only if $F$ has no index-two subgroups.
\end{theorem}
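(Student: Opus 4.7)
The two directions are quite asymmetric: the ``if'' direction follows directly from the machinery already recalled, while the ``only if'' direction requires an explicit construction.

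For the ``if'' direction, let $M=\Gamma\backslash G$ be any infra-nilmanifold with holonomy $F$, let $f\colon M\to M$ be any self-map with affine homotopy lift $(\dd,\D)$, and let $\rho=\rho_{\leq 1}\oplus\rho_{>1}$ be the decomposition of the holonomy representation induced by $\D$. The key observation is that $\rho_{>1}$ is a genuine real subrepresentation of $\rho$, so $\det\rho_{>1}\colon F\to\R^\ast$ is a group homomorphism whose image lies in $\{\pm 1\}$. Its kernel is precisely $p(\Gamma_+)$, which is therefore a subgroup of $F$ of index at most $2$. Under the hypothesis that $F$ has no index-two subgroup this forces $p(\Gamma_+)=F$, hence $\Gamma_+=\Gamma$; Theorem~\ref{NielsenLefschetz} then yields $N(f)=\pm L(f)$, and combined with $N(f)\geq 0$ this gives $N(f)=|L(f)|$.

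For the ``only if'' direction, suppose $F$ admits an index-two subgroup $F'$, with associated sign character $\chi\colon F\to\{\pm 1\}$ of kernel $F'$. I would produce a single flat manifold with holonomy $F$ admitting a self-map that violates the Anosov relation. By the Auslander--Kuranishi theorem, every finite group arises as the holonomy of some Bieberbach group; stabilising the construction (for example by adding copies of the regular representation of $F$ to the $F$-lattice), one can arrange a Bieberbach group $\Gamma\subseteq\Aff(\R^n)$ whose holonomy representation splits as $\rho=\chi\oplus\rho'$ in an adapted integral basis. Now take $\D$ to be block-diagonal, acting as multiplication by an integer $\lambda>1$ on the $\chi$-summand and as the zero matrix on $\rho'$. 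Because $\D$ commutes with every $\rho(\A)$, the intertwining condition of Proposition~\ref{semi-affconj} holds with $\phi=\mathrm{id}_F$, and a direct check using Theorem~\ref{leemaps} shows that $(0,\D)$ is the affine homotopy lift of an honest self-map $f\colon M\to M$.

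For this $f$ the induced decomposition is $\rho_{>1}=\chi$ and $\rho_{\leq 1}=\rho'$, so $p(\Gamma_+)=F'\neq F$. Applying Theorem~\ref{LeeForm} one finds $L(f)=\frac{1}{|F|}\sum_{\A\in F}(1-\chi(\A)\lambda)=1$ (since $\chi$ is non-trivial its character sum over $F$ vanishes), while $L(f_+)=\frac{1}{|F'|}\sum_{\A\in F'}(1-\lambda)=1-\lambda$. With exactly $p=1$ positive real eigenvalue of $\D_\ast$ strictly greater than $1$, Theorem~\ref{NielsenLefschetz} gives $N(f)=-\bigl(L(f_+)-L(f)\bigr)=\lambda$, and hence $N(f)=\lambda\neq 1=|L(f)|$, so the Anosov relation fails. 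The main obstacle is the construction step: producing a torsion-free $\Gamma\subseteq\R^n\rtimes F$ whose holonomy representation contains $\chi$ as a direct summand. This is a cohomological question in $H^2(F,-)$ that typically requires the stabilisation move mentioned above; once it is settled, the choice of $\D$, the descent to a self-map via Theorem~\ref{leemaps}, and the Lefschetz calculation are all routine.
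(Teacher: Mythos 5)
This theorem is quoted in the paper from \cite{ddp11-1} without an internal proof, so there is nothing in the text to compare against; I assess your proposal on its own terms. Your ``if'' direction is correct and complete: $\det\circ\,\rho_{>1}$ is a homomorphism from $F$ to a finite subgroup of $\R^\ast$, hence into $\{\pm 1\}$, its kernel pulls back to $\Gamma_+$, Theorem~\ref{positivepart} makes the index $1$ or $2$, the hypothesis on $F$ forces $\Gamma=\Gamma_+$, and Theorem~\ref{NielsenLefschetz} together with $N(f)\geq 0$ gives $N(f)=|L(f)|$.

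The ``only if'' direction has a genuine gap, and it is not the step you flagged as the obstacle. Arranging $\rho=\chi\oplus\rho'$ is in fact the easy part: there is no need for the regular representation (which need not contain $\chi$ as a $\Z$-direct summand anyway); one simply enlarges the translation lattice of a Bieberbach group $\Gamma_0$ with holonomy $F$ by a rank-one summand $\Z_\chi$ with zero translational part in the new coordinate, and the resulting $\Gamma$ is torsion-free because the kernel of the retraction $\Gamma\to\Gamma_0$ is $\Z$. The real problem is that your affine map does not descend to a self-map of this manifold. Write $\gamma=\bigl((a,m),\A\oplus\chi(\A)\bigr)$ and take $\D=0\oplus\lambda$ (zero on the $\rho'$-block), $\dd=(\dd_0,\dd_1)$. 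The condition $\varphi(\gamma)(\dd,\D)=(\dd,\D)\gamma$ forces $\varphi(\gamma)=(\beta,\B)$ with $\B=\B_0\oplus\chi(\B_0)$ and $\chi(\B_0)=\chi(\A)$ (from $\B\D=\D\A$ on the $\chi$-coordinate), and comparing translational parts forces the $\Gamma_0$-component of $\varphi(\gamma)$ to be $\bigl((I-\B_0)\dd_0,\B_0\bigr)$. This element of $\Aff(\R^n)$ fixes the point $\dd_0$; since $\Gamma_0$ is torsion-free it acts freely on $\R^n$, so the only element of $\Gamma_0$ fixing $\dd_0$ is the identity, forcing $\B_0=I$ and hence $\chi(\A)=1$. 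Thus for any $\gamma$ with $\chi(p(\gamma))=-1$ no admissible $\varphi(\gamma)$ exists, and $(\dd,\D)$ is not an affine homotopy lift of any self-map, for any choice of $\dd$. The obvious repair --- taking $\D=I\oplus\lambda$, so that $\varphi=\mathrm{id}_{\Gamma_0}\oplus(\times\lambda)$ is an honest endomorphism of $\Gamma$ --- destroys your Lefschetz computation: then $\det(I-\A_\ast\D_\ast)=\det(I-\rho'(\A))\,(1-\chi(\A)\lambda)$, and each factor $\det(I-\rho'(\A))$ vanishes (every holonomy element of a Bieberbach group has eigenvalue $1$, again by freeness of the action), so $L(f)=N(f)=0$ and the Anosov relation holds trivially for that map. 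Exhibiting a genuine violator therefore requires a more careful simultaneous choice of the manifold and of a map that expands a suitable part of $\rho'$ as well as the $\chi$-summand; this is exactly the nontrivial content of the construction in \cite{ddp11-1}, and your proposal does not supply it.
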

\section{The theoretical idea behind the computations}
In order to obtain all possible Lefschetz and Nielsen zeta functions for self-maps on infra-nilmanifolds, we have to know what the possibilities are for such  self-maps. By the work of K.B. Lee (\cite{lee95-2}), we know that every such a map is homotopic to a map that is induced by an affine map on the Lie group. Because $N(f)$ and $L(f)$ are topological invariants, we know it suffices to know all possible affine maps that induce a self-map on the infra-nilmanifold. In a certain sense, calculating which maps are possible, is the hard part in calculating these zeta functions.

\medskip

For infra-nilmanifolds up to dimension $3$, there are only two possibilities for the nilpotent Lie group $G$ on which the manifold is modeled. It can either be modeled on $\R^n$, which coincides with the abelian case, or it can be modeled on the $2$-step nilpotent Heisenberg group.

\subsection{Flat manifolds, the abelian case}

\subsubsection{The practical approach}
Let $\D\in \R^{n\times n}$ and $\dd\in \R^n$ and let $\Gamma$ be a Bieberbach group. Then $(\dd,\D)$ will be a self-map on the infra-nilmanifold $\Gamma\backslash\R^n$, if and only if there exists a morphism $\varphi:\Gamma\to \Gamma$, such that for every $(\alpha,\A)\in \Gamma$ \begin{equation}\label{keyequ}
(\dd,\D)(\alpha,\A)=\varphi(\alpha,\A)(\dd,\D).
\end{equation}

For computational reasons, we will always assume $\Gamma\cap \R^n$ to be precisely $\Z^n$. This can always be achieved by making an affine change of coordinates. Under this assumption, we know that two elements of $\Gamma$ with the same rotational part can only differ by an integer vector in their translational part.

\medskip

To check for which affine maps the condition (\ref{keyequ}) is met, we pick a set of generators for $\Gamma$, preferably as small as possible. To every generator $(\alpha,\A)$ we assign a $(\beta,\B)\in \Gamma$ and check under which assumptions the relation $$(\dd,\D)(\alpha,\A)=(\beta,\B)(\dd,\D)$$is satisfied. 

\medskip

Practically, we assign an arbitrary element $\B$ of $F$ to every of the chosen generators $(\alpha,\A)$. We check which conditions need to hold for $\D$ in order to have that $\D\A=\B\D$, for all generators simultaneously. By doing this, we actually check for conditions on $\D$ in order to have an equality for the rotational parts of equation (\ref{keyequ}). Most of the time this heavily simplifies $\D$, which makes further computations a lot easier.

\medskip

Subsequently, we compare the translational parts of equation (\ref{keyequ}). Since the lattice of our group equals $\Z^n$, we only have to check under which conditions $$\dd+\D(\alpha)\in \beta+\B(\dd) +\Z^n.$$When no contradictions occur, the conditions that are found will give us an affine homotopy lift of a map on our infra-nilmanifold. Since our method takes all possible morphisms $\varphi:\Gamma\to \Gamma$ into account, we will find all possible self-maps on infra-nilmanifolds up to a homotopy.

\subsubsection{Example: the Klein bottle}
To exemplify our approach, we will compute all possible maps on the Klein bottle.
\medskip

First of all, note that the following semigroup representation is faithful:$$\aff(\R^n)\to M_{n+1}(\R):(\dd,\D)\mapsto \begin{pmatrix}
\D & \dd\\
0 & 1
\end{pmatrix}.$$This means that it is possible to embed $\aff(\R^n)$ in $M_{n+1}(\R)$ in such a way that the operation on $\aff(\R^n)$ becomes the matrix multiplication. For computational reasons, we will consider all the elements of our Bieberbach groups to be matrices according to this embedding.\medskip

The Klein bottle group $\Gamma_K$ has the following generators:
$$\begin{pmatrix}
1 & 0&\frac{1}{2}\\
0 & -1&\frac{1}{2}\\
0 & 0&1
\end{pmatrix}\textrm{ and } \begin{pmatrix}
1 & 0&0\\
0 & 1&1\\
0 & 0&1
\end{pmatrix}.$$

Notice that $$\begin{pmatrix}
1 & 0&\frac{1}{2}\\
0 & -1&\frac{1}{2}\\
0 & 0&1
\end{pmatrix}^2=\begin{pmatrix}
1 & 0&1\\
0 & 1&0\\
0 & 0&1
\end{pmatrix}.$$Therefore, $\Gamma_K\cap \R^2=\Z^2$ and our generators are already written down in the desired form. Let us start from a general affine map on $\R^2$:$$\begin{pmatrix}
a & b & r\\
c & d & s\\
0& 0 & 1
\end{pmatrix},$$with $a,b,c,d,r,s \in \R$. First we need to check which conditions arise from looking at the relation $\D\A=\B\D$, for every generator $(\alpha, \A)$ of $\Gamma_K$ simultaneously. After this, we only need to check which extra conditions arise from the relation $\dd+\D(\alpha)\in \beta+\B(\dd) +\Z^2$. Because $\Gamma_K$ has two generators and because there are exactly two possibilities for the rotational parts of elements in $\Gamma_K$, we only have to distinguish between $4$ different cases. 
\medskip

\underline{Case 1:} $$\begin{pmatrix}
a&b\\
c&d
\end{pmatrix}\begin{pmatrix}
1&0\\
0&-1
\end{pmatrix}=\begin{pmatrix}
1&0\\
0&-1
\end{pmatrix}\begin{pmatrix}
a&b\\
c&d
\end{pmatrix} \textrm{ and } \begin{pmatrix}
a&b\\
c&d
\end{pmatrix}\begin{pmatrix}
1&0\\
0&1
\end{pmatrix}=\begin{pmatrix}
1&0\\
0&-1
\end{pmatrix}\begin{pmatrix}
a&b\\
c&d
\end{pmatrix}.$$
We find that $b=c=d=0$ and hence, the rotational part of our affine map is$$\begin{pmatrix}
a &0\\
0&0
\end{pmatrix}.$$
If we now include the translational parts, we find$$\begin{pmatrix}
\frac{a}{2}+r\\
s
\end{pmatrix}\in \begin{pmatrix}
\frac{1}{2}+r\\
\frac{1}{2}-s
\end{pmatrix}+\Z^2 \textrm{ and } \begin{pmatrix}
r\\
s
\end{pmatrix}\in \begin{pmatrix}
\frac{1}{2}+r\\
\frac{1}{2}-s
\end{pmatrix}+\Z^2.$$

This leads to a contradiction, since it is impossible that $r=\frac{1}{2}+r+z$ for a $z\in \Z$.

\medskip

\underline{Case 2:} $$\begin{pmatrix}
a&b\\
c&d
\end{pmatrix}\begin{pmatrix}
1&0\\
0&-1
\end{pmatrix}=\begin{pmatrix}
1&0\\
0&-1
\end{pmatrix}\begin{pmatrix}
a&b\\
c&d
\end{pmatrix} \textrm{ and } \begin{pmatrix}
a&b\\
c&d
\end{pmatrix}\begin{pmatrix}
1&0\\
0&1
\end{pmatrix}=\begin{pmatrix}
1&0\\
0&1
\end{pmatrix}\begin{pmatrix}
a&b\\
c&d
\end{pmatrix}.$$
We find that $b=c=0$ and hence, the rotational part of our affine map is$$\begin{pmatrix}
a &0\\
0&d
\end{pmatrix}.$$
By including the translational parts, we find

$$\begin{pmatrix}
\frac{a}{2}+r\\
\frac{-d}{2}+s
\end{pmatrix}\in \begin{pmatrix}
\frac{1}{2}+r\\
\frac{1}{2}-s
\end{pmatrix}+\Z^2 \textrm{ and } \begin{pmatrix}
r\\
d+s
\end{pmatrix}\in \begin{pmatrix}
r\\
s+1
\end{pmatrix}+\Z^2.$$
Some small calculations lead us to the following two possibilities for extra conditions on the affine map:
\begin{itemize}
\item $a$ and $d$ are odd integers, $s\in \frac{1}{2}\Z$.
\item $a$ is odd, $d$ is even and $s\in \frac{1}{4}\Z\backslash \frac{1}{2}\Z$. 
\end{itemize}
\medskip

\underline{Case 3:} $$\begin{pmatrix}
a&b\\
c&d
\end{pmatrix}\begin{pmatrix}
1&0\\
0&-1
\end{pmatrix}=\begin{pmatrix}
1&0\\
0&1
\end{pmatrix}\begin{pmatrix}
a&b\\
c&d
\end{pmatrix} \textrm{ and } \begin{pmatrix}
a&b\\
c&d
\end{pmatrix}\begin{pmatrix}
1&0\\
0&1
\end{pmatrix}=\begin{pmatrix}
1&0\\
0&-1
\end{pmatrix}\begin{pmatrix}
a&b\\
c&d
\end{pmatrix}.$$
We find that $b=c=d=0$ and hence, the rotational part of our affine map is$$\begin{pmatrix}
a &0\\
0&0
\end{pmatrix}.$$

Note that this case will yield the exact same contradiction as in Case 1.
\medskip

\underline{Case 4:} $$\begin{pmatrix}
a&b\\
c&d
\end{pmatrix}\begin{pmatrix}
1&0\\
0&-1
\end{pmatrix}=\begin{pmatrix}
1&0\\
0&1
\end{pmatrix}\begin{pmatrix}
a&b\\
c&d
\end{pmatrix} \textrm{ and } \begin{pmatrix}
a&b\\
c&d
\end{pmatrix}\begin{pmatrix}
1&0\\
0&1
\end{pmatrix}=\begin{pmatrix}
1&0\\
0&1
\end{pmatrix}\begin{pmatrix}
a&b\\
c&d
\end{pmatrix}.$$
We find that $b=d=0$ and hence, the rotational part of our affine map is$$\begin{pmatrix}
a &0\\
c&0
\end{pmatrix}.$$
By using the translational parts, we find

$$\begin{pmatrix}
\frac{a}{2}+r\\
\frac{c}{2}+s
\end{pmatrix}\in \begin{pmatrix}
r\\
s+1
\end{pmatrix}+\Z^2 \textrm{ and } \begin{pmatrix}
r\\
s
\end{pmatrix}\in \begin{pmatrix}
r\\
s+1
\end{pmatrix}+\Z^2.$$

Therefore, we get the following extra conditions:
\begin{itemize}
\item $a$ and $c$ are even integers.
\end{itemize}

\subsection{Infra-nilmanifolds modeled on the three-dimensional Heisenberg group}
\subsubsection{The practical approach}

The following group, with matrix multiplication as operation, is generally called the three-dimensional Heisenberg group:$$H:=\left\{\left(\begin{matrix}
1&x&y\\
0&1&z\\
0&0&1
\end{matrix}\right) \| x,y,z\in \R \right\}.$$
For any integer $k\neq 0$, we can consider the following elements in $H$:$$a=\left(\begin{matrix}
1&0&0\\
0&1&1\\
0&0&1
\end{matrix}\right)\textrm{ } b=\left(\begin{matrix}
1&1&0\\
0&1&0\\
0&0&1
\end{matrix}\right)\textrm{ } c=\left(\begin{matrix}
1&0&\frac{1}{k}\\
0&1&0\\
0&0&1
\end{matrix}\right).$$
Note that they form a uniform lattice of $H$ and that $[b,a]=c^k$. For notational reasons, we will write $h(x,y,z)$ instead of the matrix $$\left(\begin{matrix}
1&y&\frac{z}{k}\\
0&1&x\\
0&0&1
\end{matrix}\right).$$It is easy to see that $h(1,0,0), h(0,1,0)$ and $h(0,0,1)$ coincide with $a, b$ and $c$, respectively.

\medskip

Consider a three-dimensional infra-nilmanifold $\Gamma\backslash H$ and let $\Id \in \Aut(H)$ be the identity automorphism. After conjugating with an element of $\Aff(H)$, we can always assume $\Gamma\cap H$ to be the group generated by the elements $(a, \Id), (b, \Id)$ and $(c,\Id)$. This means that we can always assume the lattice of $\Gamma$ to be consisting of the elements: $$\{(h(z_1,z_2,z_3), \Id)\| z_1,z_2,z_3\in \Z \}.$$

The following lemma will make our work a lot easier, since it provides us with a way to do all the computations in $\aff(H)$ by working on matrices. It is a generalized version of the representation found in \cite{ddp04-2}. 
\begin{lemma}
The following map is a faithful representation:$$\psi: H\rtimes \Endo(H)\to M_4(\R):(h(x,y,z),\varphi)\to \left(\begin{matrix}
1&\frac{k y}{2}&\frac{-kx}{2}& \frac{-kxy}{2}+z\\
0&1&0&x\\
0&0&1&y\\
0&0&0&1
\end{matrix}\right)\cdot M_{\varphi}.$$Here $M_\varphi$ denotes the following block matrix:$$\left(\begin{matrix}
\varphi_\ast&0\\
0&1
\end{matrix}\right),$$where $\varphi_\ast$ is the matrix representation of the differential of $\varphi$, with respect to the basis $\{\log(c),\log(a),\log(b)\}$.
\end{lemma}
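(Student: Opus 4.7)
The strategy is to verify, in order, that $\psi$ respects the semigroup law of $H\rtimes \Endo(H)$ and that $\psi$ is injective. Writing $A(h)$ for the first factor on the right-hand side of the formula, so that $\psi(h,\varphi)=A(h)\,M_\varphi$, and using the semigroup law $(h_1,\varphi_1)(h_2,\varphi_2)=(h_1\varphi_1(h_2),\,\varphi_1\varphi_2)$, one checks that the homomorphism property of $\psi$ reduces to three separate identities: $A(h_1h_2)=A(h_1)A(h_2)$, the intertwining relation $M_\varphi A(h)=A(\varphi(h))M_\varphi$, and the multiplicativity $M_{\varphi_1\varphi_2}=M_{\varphi_1}M_{\varphi_2}$. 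The last of these is immediate from the block form of $M_\varphi$ and the fact that $(\varphi_1\varphi_2)_\ast=(\varphi_1)_\ast(\varphi_2)_\ast$ in any basis.

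The first identity is a direct matrix computation using the Heisenberg product $h(x_1,y_1,z_1)h(x_2,y_2,z_2)=h(x_1+x_2,\,y_1+y_2,\,z_1+z_2+ky_1x_2)$. The only entry requiring genuine work is position $(1,4)$: the half-coefficients $\tfrac{ky}{2}$ and $-\tfrac{kx}{2}$ in the first row of $A(h)$ are designed so that the cross terms combine to yield exactly $-\tfrac{k(x_1+x_2)(y_1+y_2)}{2}+z_1+z_2+ky_1x_2$. All other entries are read off at sight.

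The intertwining identity $M_\varphi A(h)=A(\varphi(h))M_\varphi$ is the main obstacle. My plan is to compute $\varphi(h(x,y,z))$ explicitly via $\varphi\circ\exp=\exp\circ\varphi_\ast$, together with the coordinate expression $\log h(x,y,z)=(z-\tfrac{kxy}{2})\log c+x\log a+y\log b$, which follows from the Baker--Campbell--Hausdorff formula (truncated at second order since $\lie$ is $2$-step nilpotent). Because $\log c$ spans the center of $\lie$ and $[\log a,\log b]=-k\log c$, the centrality constraint forces the first column of $\varphi_\ast$ to be $(a_{11},0,0)^T$, and the bracket-compatibility forces $a_{11}=a_{22}a_{33}-a_{23}a_{32}$. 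With this explicit description of $\varphi(h)$, one expands $M_\varphi A(h)$ and $A(\varphi(h))M_\varphi$ column by column; the quadratic term $-\tfrac{kxy}{2}$ and the cross terms from $\varphi_\ast$ must be tracked carefully, and this is the computational heart of the proof.

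Faithfulness is then immediate. The last column of $\psi(h,\varphi)=A(h)M_\varphi$ coincides with the last column of $A(h)$, namely $(z-\tfrac{kxy}{2},\,x,\,y,\,1)^T$, which uniquely determines $(x,y,z)$ and hence $h$. Left-multiplying by $A(h)^{-1}$ then recovers $M_\varphi$, whose top-left $3\times 3$ block is $\varphi_\ast$; since $\exp:\lie\to H$ is a diffeomorphism, $\varphi_\ast$ determines $\varphi$ uniquely, and $\psi$ is injective.
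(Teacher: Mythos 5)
Your outline is correct and is precisely the direct verification that the paper declines to write out (the lemma is followed by ``we will omit the proof since it involves a long, but straightforward computation''): the reduction to the three identities $A(h_1h_2)=A(h_1)A(h_2)$, $M_\varphi A(h)=A(\varphi(h))M_\varphi$ and $M_{\varphi_1\varphi_2}=M_{\varphi_1}M_{\varphi_2}$, the constrained block form of $\varphi_\ast$, the coordinate formula $\log h(x,y,z)=(z-\tfrac{kxy}{2})\log c+x\log a+y\log b$, and the recovery of $(x,y,z)$ and then $\varphi_\ast$ from the last column and the remaining block all check out (the intertwining identity ultimately rests on $P^{T}\Omega P=(\det P)\,\Omega$ for a $2\times 2$ antisymmetric $\Omega$, which is where the hypothesis $a_{11}=a_{22}a_{33}-a_{23}a_{32}$ enters). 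One small point of precision: the first column of $\varphi_\ast$ is $(a_{11},0,0)^{T}$ not because $\log c$ spans the center --- a general endomorphism need not preserve the center --- but because $\log c$ spans the derived subalgebra $[\lieH,\lieH]$, which every endomorphism does preserve; since the two coincide here, your conclusion stands.
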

We will omit the proof since it involves a long, but straightforward computation. 
\medskip

Note that this lemma actually tells us that the semigroup $H\rtimes \Endo(H)$ can be embedded in the semigroup $\aff(\R^3)$. By using this embedding and the fact that the lattice of $\Gamma$ is precisely the group $\{(h(z_1,z_2,z_3), \Id)\| z_1,z_2,z_3\in \Z \}$, it is possible to compute all possible affine maps in the exact same way as in the abelian case.

\begin{remark}
In \cite{deki94-2}, it is shown that such an embedding always exists for $2$-step nilpotent Lie groups.
\end{remark}

\subsection{Computation of the Lefschetz zeta function}

When we want to use Theorem \ref{Nielsenzeta} to compute the Nielsen zeta functions, we need to know how to calculate Lefschetz zeta functions. One of the tools for this computation, is the following lemma.

\begin{lemma}\label{lemzeta}
Let $f$ be a self-map on a compact polyhedron. If, for every $k>0$, we have that$$\ds L(f^k)=\sum_{i=1}^{n_a}a_i^k-\sum_{j=1}^{n_b}b_j^k,$$with $a_i,b_j \in \C$, then the Lefschetz zeta function $L_f(z)$ will be equal to $$L_f(z)=\frac{\ds\prod_{j=1}^{n_b}(1-b_jz)}{\ds\prod_{i=1}^{n_a}(1-a_iz)}.$$
\end{lemma}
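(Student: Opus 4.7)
The plan is to unwind the definition of $L_f(z)$ and to recognize the inner sum as the Taylor expansion of a logarithm, so the exponential collapses to a rational function. This is really a formal power series identity, and that is the setting in which I would prove it; any convergence concerns on a disc centred at $0$ will then follow automatically, since both sides are meromorphic on $\C$.

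First I would substitute the given expression for $L(f^k)$ directly into the definition
\[
L_f(z)=\exp\!\left(\sum_{k=1}^{\infty}\frac{L(f^k)}{k}z^k\right).
\]
Since the $k$-index sum in the definition is absolutely convergent as a formal power series in $z$, and each of the finite sums over $i$ and $j$ has only $n_a$ and $n_b$ terms respectively, I can interchange the order of summation to obtain
\[
L_f(z)=\exp\!\left(\sum_{i=1}^{n_a}\sum_{k=1}^{\infty}\frac{(a_iz)^k}{k}-\sum_{j=1}^{n_b}\sum_{k=1}^{\infty}\frac{(b_jz)^k}{k}\right).
\]

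Next I would use the classical identity $\sum_{k=1}^{\infty}\frac{x^k}{k}=-\log(1-x)$, valid as a formal power series in $x$, to rewrite each inner sum. This transforms the exponent into
\[
-\sum_{i=1}^{n_a}\log(1-a_iz)+\sum_{j=1}^{n_b}\log(1-b_jz),
\]
and then applying $\exp$ turns the sums of logarithms into products, giving precisely
\[
L_f(z)=\frac{\prod_{j=1}^{n_b}(1-b_jz)}{\prod_{i=1}^{n_a}(1-a_iz)}.
\]

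There is no serious obstacle here; the only mild point to be careful about is treating the identity at the level of formal power series (so that logarithms of formal series with constant term $1$ are well defined) before invoking convergence. Once that is in place, the computation is purely algebraic and the result follows immediately.
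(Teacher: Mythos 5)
Your proposal is correct and follows essentially the same route as the paper's own proof: substitute the hypothesis into the definition of $L_f(z)$, swap the order of summation, recognize each inner sum as $-\log(1-a_iz)$ or $-\log(1-b_jz)$, and exponentiate. The only difference is that you make explicit the formal-power-series justification that the paper leaves as ``straightforward.''
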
 

\begin{proof}
The following power series is well known:$$\sum_{k=1}^{\infty}\frac{a^kz^k}{k}=-\ln(1-az).$$Therefore, $$\sum_{k=1}^{\infty}\frac{L(f^k)z^k}{k}=\sum_{i=1}^{n_a}(-\ln(1-a_iz))-\sum_{j=1}^{n_b}(-\ln(1-b_jz)).$$The rest of the proof is straightforward.

\end{proof}

Because of the definition of the Lefschetz number, we know it is always possible to express $L(f^k)$ as a sum of $k$-th powers. Indeed, since every infra-nilmanifold $M$ is a compact polyhedron, we know that $H_i(M,\Q)$ will be a finite-dimensional vector space over $\Q$, for every $i=0,\dots \Dim M$. Therefore, \[ L(f^k)= \sum_{i=0}^{{\Dim}\;M} (-1)^i {\Tr} \left( f_{\ast,i}^k : H_i(M,\Q) \to H_i( M,\Q)\right)\]can be expressed as the alternating sum of the $k$-th powers of eigenvalues of the linear maps $f_{\ast,i}$. Hence, we can always find complex numbers such that the Lefschetz zeta function looks like the resulting zeta function in Lemma \ref{lemzeta}.

\medskip

Although this argumentation provides us with the existence of numbers such that $$\ds L(f^k)=\sum_{i=1}^{n_a}a_i^k-\sum_{j=1}^{n_b}b_j^k,$$it can be rather complicated to find these numbers explicitly. We therefore chose to work with the formula of Theorem \ref{LeeForm}, from which we managed to compute $L(f^k)$ directly. We can always put $\D_\ast$ in Jordan normal form $\Lambda$, by conjugating with an invertible matrix $P$. In this way, we find$$\det(I-\A_\ast \D_\ast^k)=\det(I-P\A_\ast P^{-1} \Lambda^k).$$By using this argument, we can isolate the $k$-th powers of the eigenvalues of $\D_\ast$, which makes it easier to find an expression for $L(f^k)$ as a sum of $k$-th powers. 

\medskip

We give a small example. Take the map $(\dd,\D):\R^2\to \R^2$, with $$\D=\begin{pmatrix}
a &0\\
0 & b
\end{pmatrix} \textrm{ and } \dd=\begin{pmatrix}
0 \\
0
\end{pmatrix},$$with $a,b$ odd integers. This map is an affine homotopy lift of a continuous self-map on the Klein bottle. Due to the formula in Theorem \ref{LeeForm} and by using the fact that $\D$ is already written down in Jordan form, we see that$$L(f^k)=\frac{1}{2}\left((1-a^k)(1-b^k)+(1-a^k)(1+b^k)\right)=1-a^k.$$Lemma \ref{lemzeta} tells us that $$L_f(z)=\frac{1-az}{1-z}.$$

\section{Computational results}
\subsection{The abelian case}
In dimension $1$, there is only $1$ flat manifold, the circle. In dimension $2$, there are $2$: the $2$-torus and the Klein bottle. In dimension $3$, there exist $10$ different flat manifolds, among which the $3$-torus and the Hantzsche-Wendtmanifold are the most well known. For every of those manifolds, we can find the generators of the corresponding Bieberbach groups on \href{http://wwwb.math.rwth-aachen.de/carat/bieberbach.html}{the Page of Lower Dimensional Bieberbach Groups}, \cite{carat06-1}. 
\subsubsection{Tori}
It is well known and easy to check that every map on a torus is induced by an affine map with an integer-valued rotational part and an arbitrary translational part. 
\begin{proposition}[\cite{fels00-2}]
Let $f:T^n\to T^n$ be a self-map on a torus which is induced by the matrix $D$. Let $\lambda_1,\dots \lambda_n$ be the eigenvalues of $D$, let $\mathcal{P}(n)$ be the power set of $\{1,\dots,n\}$ and assume that $\prod_{j\in \emptyset}\lambda_j=1$, then $$L_f(z)=\prod_{A \in \mathcal{P}(n)}\left(1-\left(\prod_{j\in A}\lambda_j\right)z\right)^{{(-1)}^{\# A+1}}.$$
\end{proposition}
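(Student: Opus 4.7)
The proof is a direct computation combining Theorem \ref{LeeForm} (in the easy case of trivial holonomy) with Lemma \ref{lemzeta}. Here is the outline.

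First, I would observe that the torus $T^n=\Z^n\backslash \R^n$ is the infra-nilmanifold whose holonomy group is trivial, so $F=\{I\}$ in Theorem \ref{LeeForm}. The affine homotopy lift of $f$ has linear part $D$, and its differential (as an endomorphism of $\R^n\cong \lie$) is again $D$. Applied to the iterate $f^k$, whose affine homotopy lift has linear part $D^k$, the formula of Theorem \ref{LeeForm} collapses to
\[
L(f^k)=\det(I-D^k).
\]

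Next, I would put $D$ in Jordan normal form (or simply use that the characteristic polynomial of $D^k$ has roots $\lambda_1^k,\dots,\lambda_n^k$) to factor
\[
\det(I-D^k)=\prod_{i=1}^n(1-\lambda_i^k).
\]
Expanding this product by distributing over all subsets $A\subseteq\{1,\dots,n\}$ yields
\[
L(f^k)=\sum_{A\in\mathcal P(n)}(-1)^{\#A}\left(\prod_{j\in A}\lambda_j\right)^{\!k},
\]
using the convention $\prod_{j\in\emptyset}\lambda_j=1$. Splitting the sum according to the parity of $\#A$ gives exactly an expression of the form required by Lemma \ref{lemzeta}, with the positive contributions ($\#A$ even) playing the role of the $a_i$'s and the negative contributions ($\#A$ odd) playing the role of the $b_j$'s.

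Finally I would invoke Lemma \ref{lemzeta} to conclude that
\[
L_f(z)=\frac{\ds\prod_{\#A\text{ odd}}\bigl(1-(\textstyle\prod_{j\in A}\lambda_j)z\bigr)}{\ds\prod_{\#A\text{ even}}\bigl(1-(\textstyle\prod_{j\in A}\lambda_j)z\bigr)}=\prod_{A\in\mathcal P(n)}\left(1-\left(\prod_{j\in A}\lambda_j\right)z\right)^{\!(-1)^{\#A+1}},
\]
which is the claimed formula. There is no real obstacle in this proof; the only thing to be mildly careful about is the sign convention $(-1)^{\#A+1}$ versus $(-1)^{\#A}$, to make sure that subsets of even cardinality (including $A=\emptyset$, which produces the factor $1-z$ in the denominator) go into the denominator, while subsets of odd cardinality go into the numerator, matching the signs coming out of the expansion of $\det(I-D^k)$.
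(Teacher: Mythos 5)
Your proof is correct and follows exactly the same route as the paper: the paper's (very brief) proof likewise observes that $L(f^k)=\det(I-D^k)=\prod_{j=1}^n(1-\lambda_j^k)$ and then applies Lemma \ref{lemzeta}. Your version merely spells out the expansion over subsets and the sign bookkeeping, both of which you handle correctly.
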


\begin{proof}
This follows easily from Lemma \ref{lemzeta} and the fact that$$L(f^k)=\det(I-D^k)=\prod_{j=1}^n(1-\lambda_j^k).$$
\end{proof}

Because the Anosov relation always holds on tori (e.g. due to Theorem \ref{AnNil} ), we know $\Gamma=\Gamma_+$. Then the previous proposition, together with Theorem \ref{Nielsenzeta}, gives us the following corollaries. From now on, $p$ will always denote the number of eigenvalues strictly greater than $1$, while $n$ denotes the number of eigenvalues strictly less than $-1$, like in Theorem \ref{Nielsenzeta}.

\begin{corollary}
If $f:S^1\to S^1$ is a map of degree $d$, then $N_f(z)$ can be found in the following diagram:
\renewcommand{\arraystretch}{1.8}
\begin{center}\begin{tabular}{ c|c|c}
  \multicolumn{1}{c|}{$d<-1$}
 & \multicolumn{1}{|c|}{$-1\leq d\leq 1$}& \multicolumn{1}{|c}{$1<d$} \\
\cline{1-3}
 $\ds \frac{1+z}{1+dz}$& $\ds\frac{1-dz}{1-z}$ &$\ds\frac{1-z}{1-dz}$ \\[1ex]

\end{tabular}
 \end{center}
\end{corollary}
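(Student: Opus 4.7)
The plan is to reduce the corollary to a direct application of the preceding proposition followed by a case split using Theorem \ref{Nielsenzeta}. The circle $S^1$ is the one-dimensional torus, so any self-map of degree $d$ is homotopic to the map induced by the $1\times 1$ matrix $D=(d)$, and the hypotheses of the torus proposition are met with a single eigenvalue $\lambda_1=d$.

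First I would compute the Lefschetz zeta function directly. Applying the preceding proposition with $n=1$, the power set $\mathcal{P}(1)=\{\emptyset,\{1\}\}$ contributes the two factors $(1-z)^{-1}$ (for $A=\emptyset$, using the convention $\prod_{j\in\emptyset}\lambda_j=1$) and $(1-dz)$ (for $A=\{1\}$). Hence
\[
L_f(z)=\frac{1-dz}{1-z}.
\]

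Next I would invoke the fact noted just before the corollary: on a torus the Anosov relation holds, so $\Gamma=\Gamma_+$, and only the top row of the table in Theorem \ref{Nielsenzeta} is relevant. The values of $p$ and $n$ for the $1\times 1$ matrix $D=(d)$ are immediate: $p=1$ precisely when $d>1$, $n=1$ precisely when $d<-1$, and both vanish in the intermediate range. This yields three cases according to the parities of $p$ and $n$.

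Finally I would plug into the appropriate entry of the table. For $d>1$ we are in the "$p$ odd, $n$ even" column, giving $N_f(z)=1/L_f(z)=(1-z)/(1-dz)$. For $-1\leq d\leq 1$ we are in the "$p$ even, $n$ even" column, giving $N_f(z)=L_f(z)=(1-dz)/(1-z)$. For $d<-1$ we are in the "$p$ even, $n$ odd" column, giving $N_f(z)=1/L_f(-z)=(1+z)/(1+dz)$. Each of these matches the entry claimed in the diagram, which completes the proof. There is no genuine obstacle here; the only mild subtlety is remembering that the classification of $p$ and $n$ is governed by strict inequalities, so that $d=\pm 1$ fall into the middle column, which is exactly what the tabulation records.
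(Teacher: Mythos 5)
Your proposal is correct and follows exactly the paper's route: it derives $L_f(z)=\frac{1-dz}{1-z}$ from the torus proposition with the single eigenvalue $d$, notes $\Gamma=\Gamma_+$ via the Anosov relation on tori, and reads off the three cases from the $\Gamma=\Gamma_+$ row of Theorem~\ref{Nielsenzeta} according to the parities of $p$ and $n$. Nothing is missing.
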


\begin{corollary}
Let $f:T^2\to T^2$ be a self-map which is induced by the matrix $D$, with eigenvalues $\lambda_1,\lambda_2$. Then $N_f(z)$ can be found in the following diagram:
\renewcommand{\arraystretch}{2}
\begin{center}\begin{tabular}{ c|c|c|c }
 \multicolumn{1}{c|}{$p$ even, $n$ even}
 & \multicolumn{1}{|c|}{$p$ even, $n$ odd}& \multicolumn{1}{|c|}{$p$ odd, $n$ even}& \multicolumn{1}{|c}{$p$ odd, $n$ odd} \\
\cline{1-4}
$\ds\frac{(1-\lambda_1z)(1-\lambda_2z)}{(1-z)(1-\lambda_1\lambda_2z)} $& $\ds\frac{(1+z)(1+\lambda_1\lambda_2z)}{(1+\lambda_1z)(1+\lambda_2z)}  $ &$\ds\frac{(1-z)(1-\lambda_1\lambda_2z)}{(1-\lambda_1z)(1-\lambda_2z)} $ & $\ds\frac{(1+\lambda_1z)(1+\lambda_2z)}{(1+z)(1+\lambda_1\lambda_2z)} $\\[1ex]

\end{tabular}
 \end{center}
\end{corollary}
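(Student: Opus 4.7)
The plan is to combine the preceding proposition (which gives $L_f(z)$ as a product indexed by subsets of $\{1,\dots,n\}$) with Theorem \ref{Nielsenzeta}, after confirming via Theorem \ref{AnNil} that $\Gamma = \Gamma_+$ so that the top row of the table in Theorem \ref{Nielsenzeta} applies. Since $T^2$ is a nilmanifold, the Anosov relation holds, so we stay in the easy case for every self-map.

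First I would specialize the Lefschetz formula to $n=2$. Running through the four subsets of $\{1,2\}$ and tracking the signs $(-1)^{\#A + 1}$, one gets immediately
\[
L_f(z) = \frac{(1-\lambda_1 z)(1-\lambda_2 z)}{(1-z)(1-\lambda_1\lambda_2 z)}.
\]
This already verifies the $(p \text{ even}, n \text{ even})$ entry, since for that parity class Theorem \ref{Nielsenzeta} gives $N_f(z) = L_f(z)$.

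Next I would read off the remaining three entries by applying the appropriate operation from the $\Gamma = \Gamma_+$ row of Theorem \ref{Nielsenzeta}: substitute $-z$ for $z$, invert, or both. Concretely, $L_f(-z)$ replaces every factor $(1-\mu z)$ by $(1+\mu z)$, producing the $(p\text{ odd}, n\text{ odd})$ entry; its reciprocal flips numerator and denominator, giving the $(p\text{ even}, n\text{ odd})$ entry; and the reciprocal of $L_f(z)$ itself yields the $(p\text{ odd}, n\text{ even})$ entry. Each of these substitutions is mechanical and matches the corresponding cell in the table.

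There is no real obstacle here: the work has already been done in the proposition that computes $L_f(z)$ and in Theorem \ref{Nielsenzeta}. The only care needed is in bookkeeping the signs and the parity cases, and in invoking Theorem \ref{AnNil} to eliminate the $\Gamma \neq \Gamma_+$ row entirely. Thus the corollary is purely a direct substitution, and a short proof noting the above three sentences suffices.
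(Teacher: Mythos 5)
Your proposal is correct and follows exactly the route the paper intends: invoke Theorem \ref{AnNil} to get $\Gamma=\Gamma_+$, specialize the torus proposition to $n=2$ to obtain $L_f(z)=\frac{(1-\lambda_1z)(1-\lambda_2z)}{(1-z)(1-\lambda_1\lambda_2z)}$, and read off the four entries from the $\Gamma=\Gamma_+$ row of Theorem \ref{Nielsenzeta}. The paper gives no separate proof beyond this observation, so nothing is missing.
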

\begin{corollary}
Let $f:T^3\to T^3$ be a self-map which is induced by the matrix $D$, with eigenvalues $\lambda_1,\lambda_2, \lambda_3$. Then $N_f(z)$ can be found in the following diagram:
\renewcommand{\arraystretch}{2}
\begin{center}\begin{tabular}{ c|c|c }
 \multicolumn{1}{c|}{}& \multicolumn{1}{|c|}{$p$ even}
 & \multicolumn{1}{|c}{$p$ odd} \\
\cline{1-3}
$n$ even & $\frac{(1-\lambda_1z)(1-\lambda_2z)(1-\lambda_3z)(1-\lambda_1\lambda_2\lambda_3z)}{(1-z)(1-\lambda_1\lambda_2z)(1-\lambda_1\lambda_3z)(1-\lambda_2\lambda_3z)} $ &$\frac{(1-z)(1-\lambda_1\lambda_2z)(1-\lambda_1\lambda_3z)(1-\lambda_2\lambda_3z)}{(1-\lambda_1z)(1-\lambda_2z)(1-\lambda_3z)(1-\lambda_1\lambda_2\lambda_3z)}$ \\[1ex]
\cline{1-3}
$n$ odd & $\frac{(1+z)(1+\lambda_1\lambda_2z)(1+\lambda_1\lambda_3z)(1+\lambda_2\lambda_3z)}{(1+\lambda_1z)(1+\lambda_2z)(1+\lambda_3z)(1+\lambda_1\lambda_2\lambda_3z)}$ & $\frac{(1+\lambda_1z)(1+\lambda_2z)(1+\lambda_3z)(1+\lambda_1\lambda_2\lambda_3z)}{(1+z)(1+\lambda_1\lambda_2z)(1+\lambda_1\lambda_3z)(1+\lambda_2\lambda_3z)} $

\end{tabular}
 \end{center}
\end{corollary}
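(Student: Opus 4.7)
The plan is to combine the preceding proposition (which gives $L_f(z)$ for self-maps on tori) with Theorem~\ref{Nielsenzeta}, using the fact that the Anosov relation holds on nilmanifolds (Theorem~\ref{AnNil}), so in particular $\Gamma=\Gamma_+$ for every self-map on $T^3$. This means only the first row of the table in Theorem~\ref{Nielsenzeta} is relevant, and the four entries are obtained from $L_f(z)$ by the operations $L_f(z),\ 1/L_f(z),\ 1/L_f(-z),\ L_f(-z)$ according to the parities of $p$ and $n$.

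First I would expand the formula of the preceding proposition for $n=3$. The power set $\mathcal{P}(\{1,2,3\})$ has eight elements, and computing $(-1)^{\#A+1}$ for each yields
\[
L_f(z)=\frac{(1-\lambda_1 z)(1-\lambda_2 z)(1-\lambda_3 z)(1-\lambda_1\lambda_2\lambda_3 z)}{(1-z)(1-\lambda_1\lambda_2 z)(1-\lambda_1\lambda_3 z)(1-\lambda_2\lambda_3 z)}.
\]
This is precisely the entry in the $p$ even, $n$ even cell, so this case is immediate once the row $\Gamma=\Gamma_+$ of Theorem~\ref{Nielsenzeta} is invoked.

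Next I would write down the three remaining cases by substitution. Replacing $z$ by $-z$ in the expression above gives
\[
L_f(-z)=\frac{(1+\lambda_1 z)(1+\lambda_2 z)(1+\lambda_3 z)(1+\lambda_1\lambda_2\lambda_3 z)}{(1+z)(1+\lambda_1\lambda_2 z)(1+\lambda_1\lambda_3 z)(1+\lambda_2\lambda_3 z)},
\]
which matches the $p$ odd, $n$ odd entry. Inverting $L_f(z)$ gives the $p$ odd, $n$ even entry, and inverting $L_f(-z)$ gives the $p$ even, $n$ odd entry. Each identification is just a rewriting, so no further calculation is needed.

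There is no real obstacle here; the step most easily mishandled is the bookkeeping of the exponents $(-1)^{\#A+1}$ in the product over $\mathcal{P}(\{1,2,3\})$, so I would verify carefully that the numerator collects the subsets of odd cardinality ($\emptyset$ being treated via the convention $\prod_{j\in\emptyset}\lambda_j=1$ contributes to the denominator) and the denominator collects the subsets of even cardinality. Once that is in place, the invocation of Theorem~\ref{Nielsenzeta} (first row) and the substitution $z\mapsto -z$ produce the four cells of the table with no further work.
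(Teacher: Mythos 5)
Your proposal is correct and follows exactly the route the paper intends: the paper derives this corollary from the preceding proposition on $L_f(z)$ for tori together with Theorem~\ref{Nielsenzeta}, using Theorem~\ref{AnNil} to guarantee $\Gamma=\Gamma_+$ so that only the first row of that theorem applies. Your expansion of the product over $\mathcal{P}(\{1,2,3\})$ and the subsequent substitutions $z\mapsto -z$ and inversion reproduce all four cells of the table as stated.
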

\subsubsection{Klein Bottle}
As already mentioned in a previous example, the Klein Bottle group has the following generators:

$$\left(\begin{matrix}
1 & 0&\frac{1}{2}\\
0 & -1&\frac{1}{2}\\
0 & 0&1
\end{matrix}\right)\textrm{ and } \left(\begin{matrix}
1 & 0&0\\
0 & 1&1\\
0 & 0&1
\end{matrix}\right).$$
The holonomy group is isomorphic to $\Z_2$.\\

\underline{Possible affine homotopy lifts:}
\begin{itemize}
\item $\left(\begin{matrix}
a & 0&r\\
0 & b&s\\
0 & 0&1
\end{matrix}\right)$, with $a$ and $b$ odd, $r\in \R$ and $s\in \frac{1}{2}\Z$.

\item $\left(\begin{matrix}
a & 0&r\\
0 & b&s\\
0 & 0&1
\end{matrix}\right)$, with $a$ odd, $b$ even, $r\in \R$ and $s\in \frac{1}{4}\Z\setminus \frac{1}{2}\Z$.

The Nielsen zeta functions can be found in the following table:

\begin{center}
\renewcommand{\arraystretch}{1.7}\begin{tabular}{ c|c|c|c|c }
\multicolumn{1}{l}{ }
 &  \multicolumn{1}{|c|}{$p$ even, $n$ even}
 & \multicolumn{1}{|c|}{$p$ even, $n$ odd}& \multicolumn{1}{|c|}{$p$ odd, $n$ even}& \multicolumn{1}{|c}{$p$ odd, $n$ odd} \\
\cline{1-5}
$\Gamma= \Gamma_+$ & $\ds\frac{1-az}{1-z}$ & $\ds\frac{1+z}{1+az}$ &$\ds\frac{1-z}{1-az}$ & $\ds\frac{1+az}{1+z}$\\[1ex]
\cline{1-5}
$\Gamma\neq \Gamma_+$ &$\ds\frac{1-bz}{1-abz}$ & $\ds\frac{1+abz}{1+bz}$ &$\ds\frac{1-abz}{1-bz}$ & $\ds\frac{1+bz}{1+abz}$\\[1ex]
\end{tabular}
 \end{center}
 
\item $\left(\begin{matrix}
a & 0&r\\
b & 0&s\\
0 & 0&1
\end{matrix}\right)$, with $a$ and $b$ even and $r,s\in \R$.

Because of Corollary \ref{AnCor}, we know that $\Gamma=\Gamma_+$ is the only possibility in this case. Hence, we have the following Nielsen zeta functions:

\begin{center}
\renewcommand{\arraystretch}{1.8}\begin{tabular}{ c|c|c|c|c }
\multicolumn{1}{l}{ }
 &  \multicolumn{1}{|c|}{$p$ even, $n$ even}
 & \multicolumn{1}{|c|}{$p$ even, $n$ odd}& \multicolumn{1}{|c|}{$p$ odd, $n$ even}& \multicolumn{1}{|c}{$p$ odd, $n$ odd} \\
\cline{1-5}
$\Gamma= \Gamma_+$ & $\ds\frac{1-az}{1-z}$ & $\ds\frac{1+z}{1+az}$ &$\ds\frac{1-z}{1-az}$ & $\ds\frac{1+az}{1+z}$\\[1ex]
\end{tabular}
 \end{center}
\end{itemize}

\subsubsection{Hantzsche-Wendt}
The Hanztsche-Wendtgroup has the following generators:

$$\left(\begin{matrix}
-1&0&0&0\\
0 & 1 & 0& \frac{1}{2}\\
0&0&-1 &\frac{1}{2}\\
0&0&0&1
\end{matrix}\right) \textrm{ and } \left(\begin{matrix}
1&0&0&\frac{1}{2}\\
0 & -1 & 0& 0\\
0&0&-1 &0\\
0&0&0&1
\end{matrix}\right) .$$
The holonomy group is isomorphic to $\Z_2\oplus \Z_2$.\\

\underline{Possible affine homotopy lifts:}
\begin{itemize}
\item $\left(\begin{matrix}
a&0&0&r\\
0 & b & 0& s\\
0&0&c &t\\
0&0&0&1
\end{matrix}\right)$, with $a,b$ and $c$ odd and $r,s,t \in \frac{1}{2}\Z$.

\begin{center}
\renewcommand{\arraystretch}{1.7}\begin{tabular}{ c|c|c|c|c }
\multicolumn{1}{l}{ }
 &  \multicolumn{1}{|c|}{$p$ even, $n$ even}
 & \multicolumn{1}{|c|}{$p$ even, $n$ odd}& \multicolumn{1}{|c|}{$p$ odd, $n$ even}& \multicolumn{1}{|c}{$p$ odd, $n$ odd} \\
\cline{1-5}
$ \Gamma= \Gamma_+$ & $\ds \frac{1-abcz}{1-z}$ & $\ds \frac{1+z}{1+abcz}$ &$\ds \frac{1-z}{1-abcz}$ & $\ds \frac{1+abcz}{1+z}$\\[1ex]
\end{tabular}
 \end{center}

Suppose now that $\Gamma\neq \Gamma_+$ and denote our affine map by $(\dd,\D)$. Then the subgroup $\Gamma_+$ may vary for different values of $a,b$ and $c$. Denote $\Lambda_{\leq 1}$ and $\Lambda_{>1}$ as all the eigenvalues of $\D$ with modulus $\leq 1$, $>1$, respectively.  When $a, b$ and $c$ are simultaneously in $\Lambda_{\leq 1}$, then $\Gamma=\Gamma_+$. The same applies when they are simultaneously in $\Lambda_{>1}$.

So, when $\Gamma\neq \Gamma_+$, we know there is a set $\Lambda_{\leq 1}$ or $\Lambda_{>1}$ with precisely $1$ element. Call that element $\lambda_1$ and call the other two elements $\lambda_2$ and $\lambda_3$. Then,$$L_f(z)=\frac{1-\lambda_1\lambda_2\lambda_3z}{1-z} \textrm{ and } L_{f_+}(z)=\frac{(1-\lambda_1z)(1-\lambda_1\lambda_2\lambda_3z)}{(1-z)(1-\lambda_2\lambda_3z)}.$$
By Theorem \ref{Nielsenzeta}, we have the following Nielsen zeta functions:

\begin{center}
\renewcommand{\arraystretch}{1.7}

   \end{center}

 \end{itemize}\end{enumerate}
\subsection{The two-step nilpotent case}
All possible almost-Bieberbach groups can be found in \cite{deki96-1}, where they are already written down in the desired form, embedded in $\aff(\R^3)$, with the right lattice. We will use the same names for our groups as in \cite{deki96-1}.

\medskip

We will now describe all possible maps on infra-nilmanifolds modeled on the Heisenberg group $H$. It will suffice to list all possibilities for $\D_\ast$ and $\dd=h(r,s,t)$, for which $(\dd,\D)$ is the affine homotopy lift of the map $f:\Gamma\backslash H\to \Gamma \backslash H$. Indeed, because $H$ is a simply connected, connected, nilpotent Lie group, the maps $\log$ and $\exp$ are diffeomorphisms and it suffices to describe $\D_\ast:\lieH\to \lieH$ in order to fully understand the endomorphism $\D$ on $H$. Since $\D_\ast$ is the map we need to know in order to calculate Lefschetz and Nielsen numbers, this is the desired way of listing all possible maps from our point of view.

\subsubsection{Nilmanifolds}
\begin{enumerate}
\item Type I, $k>0$: $$\Gamma=\{a,b,c\|[b,a]=c^k,[c,a]=1,[c,b]=1\}$$

\underline{Generators:}
 $$\psi(a)=\left(
\right)$, with $r,s,t\in \R$.
The Nielsen zeta function is$$\frac{1}{1-z}.$$
\end{itemize}
\end{enumerate}
\bibliography{G:/algebra/ref}
\bibliographystyle{G:/algebra/ref}
\end{document}